\theoremstyle{plain}
\newtheorem{theorem}{Theorem}[section]
\newtheorem{definition/theorem}[theorem]{Definition/Theorem}
\newtheorem{cor}[theorem]{Corollary}
\newtheorem{lemma}[theorem]{Lemma}
\newtheorem{prop}[theorem]{Proposition}
\theoremstyle{definition}
\newtheorem{defn}[theorem]{Definition}
\newtheorem{example}[theorem]{Example}
\newtheorem{remark}[theorem]{Remark}
\newtheorem{Definitions and Notation}[theorem]{Definitions and
Notation}
\numberwithin{equation}{section}
\newcommand{\f}[1]{\ensuremath{\mathfrak{#1}}}
\newcommand{\ol}[1]{\ensuremath{\overline{#1}}}
\newcommand{\Z}{\ensuremath{\mathbb{Z}}}
\newcommand{\m}{\mathfrak{m}}
\newcommand{\be}{\begin{enumerate}}
\newcommand{\ee}{\end{enumerate}}
\DeclareMathOperator{\ord}{ord}
\DeclareMathOperator{\chara}{char}
\newcommand{\F}{\ensuremath{\mathbb{F}}}
\title[Sets of Lengths]{Sets of Lengths of Powers of a Variable}
\author{Richard Belshoff}
\address{Department of Mathematics, Missouri State University, Springfield, MO
  65897, USA}
\email{rbelshoff@missouristate.edu}
\author{Daniel Kline}
\address{Department of Mathematics, Milligan College, TN 37682, USA}
\email{dbkline@milligan.edu}
\author{Mark W. Rogers}
\address{Department of Mathematics, Missouri State
  University, Springfield, MO 65897, USA}
\email{markrogers@missouristate.edu}
\keywords{non-unique factorization, Artinian local ring, polynomial}
\subjclass[2000]{Primary 13A05; Secondary 13E10} 
\date{\today}
\begin{document}

\begin{abstract}
  A positive integer $k$ is a \emph{length} of a polynomial if that polynomial
  factors into a product of $k$ irreducible polynomials.  We find the set of
  lengths of polynomials of the form $x^n$ in $R[x]$, where $(R, \m)$ is an
  Artinian local ring with $\m^2=0$.
\end{abstract}

\maketitle

\section{Introduction}

In this paper we study the non-uniqueness of factorizations of $x^{n}$ in
$R[x]$, where $(R, \f{m})$ is a commutative Artinian local ring with identity,
with the added restriction that $\f{m}^{2} = 0$.  For example, $R$ could be the
ring $\Z/p^{2}\Z=\Z_{p^{2}}$ where $p$ is prime.  
\begin{example} Consider the following factorizations of $x^6$ in $\Z_9[x]$.
  \be
  \item $x^6 = x \cdot x \cdot x \cdot x \cdot x \cdot x$
  \item $x^6 = x \cdot x \cdot (x^{2} + 3) \cdot (x^{2} - 3)$
  \item $x^6 = (x^{2} + 3) \cdot (x^{2} + 3) \cdot (x^{2} + 3)$
  \item $x^6 = (x^{3} + 3) \cdot (x^{3} - 3)$
	\ee
\end{example}
The first factorization expresses $x^{6}$ in the usual way as a product of 6
irreducible polynomials; for this reason, we say that 6 is a \emph{length} of
$x^{6}$, and if $R$ were a unique factorization domain, this would be the only
length of $x^{6}$.  However, the remaining factorizations show that 4, 3, and 2
are lengths of $x^{6}$.  As we will later see, these are all of the lengths of
$x^{6}$, and we write $L(x^{6}) = \{2, 3, 4, 6\}$.  In general, the set of
lengths of $x^n$ in $\Z_{p^2}[x]$ depends on whether $p=2$ or $p$ is an odd
prime.  For example, in $\Z_{4}[x]$, $L(x^{6}) = \{2, 4, 6\}$.

Our goal in this paper is the collection of results
Proposition~\ref{prop:lengths<6}, Lemma~\ref{lemma:card4}, and
Theorem~\ref{thm:length of x^n}, which completely determine $L(x^{n})$ over
Artinian local rings that are not fields but for which the square of the maximal
ideal is zero.  The result depends on whether $n$ is even or odd, and whether
the cardinality of $R$ is 4 or not (there are only two such rings with
cardinality 4).  For example, if $n$ is an even integer and the cardinality of
$R$ is greater than 4, we show that
$L(x^n) = \{2, 3, 4, 5, \ldots, n-2\}\cup \{n\},$ as we saw above for $n = 6$.

For a recent survey of sets of lengths, we refer the reader to the recent paper
\cite{G} by Alfred Geroldinger.

\section{Preliminaries} \label{S:Preliminaries}

For the rest of this paper, unless otherwise specified, $(R, \f{m})$ is a
commutative Artinian local ring identity, having unique maximal ideal $\m \ne 0$
and residue field $\ol{R} = R/\f{m}$; $R[x]$ is the polynomial ring in the
variable $x$ with coefficients in $R$.  The concept of an \emph{irreducible
  element} is usually defined only for integral domains.  For rings with
zero-divisors, several different notions of irreducible have been proposed
(\cite{A1}, \cite{A2}, \cite{A3}.)  Our definition of irreducible will be the
usual one, i.e., the one that is used when $R$ \emph{is} an integral domain.  We
begin by recalling this and a few other definitions and equivalences.  Let $f(x)
= a_0 + a_1x + a_2 x^2 + \cdots + a_d x^d$ denote a polynomial of degree $d$ in
$R[x]$.
\begin{itemize}
\item The polynomial $f(x)$ is a \emph{unit} if $a_0$ is a unit and $a_i \in \m$
  for all $i>0$.
\item The polynomial $f(x)$ is a \emph{zero divisor} if each $a_i \in \m$.  Note
  that any multiple of a zero divisor is a zero divisor.
\item The polynomial $f(x)$ is \emph{regular} if $f(x)$ is not a zero divisor.
  Note that if a product is regular, so is each factor.
\item The nonunit polynomial $f(x)$ is \emph{irreducible} if $f(x)=g(x)h(x)$
  implies $g(x)$ or $h(x)$ is a unit.
\item The \emph{order} of the polynomial $f(x)$ (denoted $\ord(f)$) is the least
  $i$ such that $a_i\ne 0$.
\item By $\overline{f}(x)$ we mean the image of $f(x)$ in $\ol{R}[x]$.
\end{itemize}

The following proposition is proved by B.~R.~McDonald (\cite{M}, Theorem XIII.6)
for finite rings.  The result generalizes to the case where $R$ is any Artinian
local ring.  We will need this result in Lemma \ref{lemma:5}.

\begin{prop} \label{prop:McDonald} Every regular polynomial $f$ in $R[x]$ is
  representable as $f=uf^{*}$ where $u$ is a unit of $R[x]$ and $f^{*}$ is a
  monic polynomial of $R[x]$.  Also, $\deg(f^{*}) = \deg(\overline{f})$.
\end{prop}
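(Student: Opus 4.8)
The plan is to peel off the high-degree coefficients of $f$ one power of $\m$ at a time, exploiting that $\m$ is nilpotent in an Artinian local ring. First I would set $m = \deg(\overline{f})$; since $f$ is regular some coefficient is a unit, and by the definition of $\overline{f}$ the coefficient $a_m$ is a unit while $a_i \in \m$ for every $i > m$. Factoring out the unit $a_m \in R \subseteq R[x]$, I may assume $a_m = 1$. Because any unit $u$ of $R[x]$ has $\overline{u}$ equal to a nonzero constant, we have $\deg(\overline{uf}) = \deg(\overline{f}) = m$ for \emph{every} unit $u$; hence it suffices to produce a single unit $u$ with $\deg(uf) = m$, that is, with all terms of degree exceeding $m$ cancelled. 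The coefficient of $x^m$ in such a $uf$ is then automatically a unit, and a final multiplication by its inverse yields the desired monic $f^{*}$ of degree $m = \deg(\overline{f})$.

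To construct $u$ I would induct on the nilpotency index $n$ of $\m$ (the least $n$ with $\m^{n} = 0$). If $n = 1$ then $R$ is a field, so after the normalization $f$ already has degree $m$ and $f^{*} = f$ works. For $n \ge 2$, set $t = n-1 \ge 1$ and pass to the quotient $R' = R/\m^{t}$, whose maximal ideal has nilpotency index at most $t < n$ and whose residue field is again $\overline{R}$. The image $f'$ of $f$ is regular with $\deg(\overline{f'}) = m$, so by the inductive hypothesis there is a unit of $R'[x]$ carrying $f'$ to a monic polynomial of degree $m$. Lifting its coefficients (the constant term to a unit of $R$, the others to elements of $\m$) gives a unit $u$ of $R[x]$ with
$$ uf = q + w, \qquad q \text{ monic of degree } m, \quad w \text{ having all coefficients in } \m^{t}. $$
Thus everything below level $\m^{t}$ has already been arranged, and only the top layer $w$ remains to be removed.

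Finally I would clear the part of $w$ of degree $> m$ by repeatedly multiplying by units of the form $1 - c_m^{-1}c_e\, x^{e-m}$, where $e$ is the current top degree exceeding $m$, where $c_e \in \m^{t}$ is its coefficient, and where $c_m$ (a unit) is the coefficient of $x^m$ in the current polynomial. The crucial point is that $t = n-1$ forces $\m^{t}\m^{t} \subseteq \m^{2t} = 0$ once $n \ge 2$, so in forming this product every cross term pairing two coefficients of the degree-$>m$ part vanishes; consequently no new term of degree $\ge e$ is created, the chosen top term is cancelled, and the coefficient of $x^m$ stays a unit. Hence each multiplication strictly lowers the top degree of the offending part, and after finitely many steps only a polynomial of degree $m$ with a unit leading coefficient survives. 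Collecting all the unit factors (including the final rescaling to monic) produces $u$ and $f^{*}$. I expect the main obstacle to be exactly this termination: a single naive cancellation can \emph{raise} the degree, and the role of the nilpotency induction is precisely to reduce matters to the situation $\m^{2t} = 0$, in which the degree-raising terms all die and a clean downward induction on degree goes through.
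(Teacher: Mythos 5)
Your proof is correct, but it cannot be compared step-for-step with the paper's, because the paper does not actually prove Proposition~\ref{prop:McDonald}: it cites McDonald \cite{M} (Theorem XIII.6), where the statement is established for finite rings, and asserts without argument that ``the result generalizes'' to arbitrary Artinian local rings. Your proposal is therefore a genuinely different route --- a self-contained, elementary proof that substantiates exactly the assertion the paper leaves unproved. The mechanism is a double induction: an outer induction on the nilpotency index $n$ of $\m$, which (after applying the inductive hypothesis in $R/\m^{n-1}$ and lifting the resulting unit) leaves an error polynomial $w$ whose coefficients lie in $\m^{n-1}$; and an inner downward induction on degree, where multiplication by the units $1 - c_m^{-1}c_e x^{e-m}$ cancels the top offending term without creating terms of degree $\ge e$, precisely because $\m^{n-1}\cdot\m^{n-1} \subseteq \m^{2n-2} = 0$ once $n \ge 2$. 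You correctly identify termination as the crux: a naive one-step cancellation can raise the degree via the cross term $-c_m^{-1}c_e^{2}x^{2e-m}$, and the outer induction exists exactly to annihilate such terms. The remaining verifications are routine and your sketch covers them: every correction term is a multiple of $c_e \in \m^{n-1}$, so the coefficients in degrees above $m$ stay in $\m^{n-1}$, and the coefficient of $x^m$ changes only by elements of $\m$, hence stays a unit (this last point also follows from your observation that $\deg(\overline{uf}) = \deg(\overline{f})$ for every unit $u$ of $R[x]$). What your approach buys: a complete proof valid for every Artinian local ring, using only the nilpotency of $\m$ --- no finiteness hypothesis, no completion, no Hensel-type lifting. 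What the paper's citation buys: brevity, at the cost of leaving the passage from finite rings to Artinian local rings entirely to the reader.
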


The following simple corollary allows us to assume that irreducible factors of a
monic polynomial are themselves monic, and thus nonconstant.  We use this
corollary implicitly throughout the paper.

\begin{cor} \label{cor:monic} If $f$ is a monic polynomial in $R[x]$ such that
  $f = f_{1} f_{2} \cdots f_{k}$ for some polynomials $f_{1}$, $f_{2}$, \ldots,
  $f_{k}$ then there are monic polynomials $f_{1}^{*}$, $f_{2}^{*}$, \ldots,
  $f_{k}^{*}$ such that $f = f_{1}^{*} f_{2}^{*} \cdots f_{k}^{*}$.  If each
  $f_{i}$ is irreducible, then each $f_{i}^{*}$ is irreducible (and
  nonconstant).
\end{cor}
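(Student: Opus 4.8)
The plan is to prove Corollary~\ref{cor:monic} by applying Proposition~\ref{prop:McDonald} to each factor $f_i$ individually. The first observation I would make is that since $f$ is monic, it is in particular a regular polynomial; and since a product is regular only if each factor is regular (as noted in the preliminaries), each $f_i$ must be regular as well. This is the key structural fact that allows Proposition~\ref{prop:McDonald} to be brought to bear on the individual factors.

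Next I would invoke Proposition~\ref{prop:McDonald} for each $i$ to write $f_i = u_i f_i^{*}$, where $u_i$ is a unit in $R[x]$ and $f_i^{*}$ is monic. Multiplying these together gives
\begin{equation*}
  f = f_1 f_2 \cdots f_k = (u_1 u_2 \cdots u_k)\,(f_1^{*} f_2^{*} \cdots f_k^{*}).
\end{equation*}
Here the product $f_1^{*} \cdots f_k^{*}$ of monic polynomials is again monic, and $u := u_1 \cdots u_k$ is a unit in $R[x]$. Now both $f$ and $f_1^{*} \cdots f_k^{*}$ are monic, so comparing leading coefficients forces the leading coefficient of $u$ to equal $1$; I would then argue that a unit of $R[x]$ which, after multiplying a monic polynomial, returns a monic polynomial of the same degree, must actually be the constant unit $1$. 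Concretely, a unit has the form (unit constant term) plus (nilpotent higher terms), and monic-times-unit-equals-monic pins down the constant term to be $1$ and kills the higher terms, giving $u = 1$ and hence $f = f_1^{*} f_2^{*} \cdots f_k^{*}$.

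For the second assertion, suppose each $f_i$ is irreducible. Since $f_i = u_i f_i^{*}$ with $u_i$ a unit, $f_i^{*}$ is an associate of $f_i$, and irreducibility is manifestly preserved under multiplication by a unit; hence each $f_i^{*}$ is irreducible. Moreover $f_i^{*}$ is monic of degree $\deg(\overline{f_i})$; because $f_i$ is a nonunit regular polynomial its image $\overline{f_i}$ is a nonzero nonunit in $\overline{R}[x]$, so $\deg(\overline{f_i}) \geq 1$ and each $f_i^{*}$ is nonconstant, as claimed.

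The only delicate point I anticipate is the verification that the aggregated unit $u$ is forced to be $1$ rather than merely a unit; everything else is a direct bookkeeping application of the preceding proposition and of the elementary closure properties of regular, monic, and irreducible polynomials already recorded in the preliminaries. I would handle that point by the leading-coefficient comparison sketched above, which is short once the monic-ness of $f_1^{*}\cdots f_k^{*}$ is in hand.
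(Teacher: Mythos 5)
Your proof is correct and follows essentially the same route as the paper's: pass regularity from the monic product to each factor, apply Proposition~\ref{prop:McDonald} to write each $f_i = u_i f_i^{*}$, compare leading coefficients to force the aggregated unit $u_1\cdots u_k$ to be $1$, and note that associates of irreducibles are irreducible. Your justification that each $f_i^{*}$ is nonconstant (via $\deg(f_i^{*}) = \deg(\overline{f_i}) \geq 1$ because $f_i$ is a regular nonunit) is a harmless variant of the paper's remark that the only monic constant is $1$, which is a unit and hence not irreducible.
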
  

\begin{proof}
  Since the product $f_{1} \cdots f_{k}$ is regular, so is each $f_{i}$.  By
  Proposition~\ref{prop:McDonald}, each $f_{i} = u_{i}f_{i}^{*}$ for some unit
  $u_{i}$ and some monic polynomial $f_{i}^{*}$.  Since $f = (u_{1} \cdots
  u_{k})f_{1}^{*} \cdots f_{k}^{*}$ and $f$ and $f_{1}^{*} \cdots f_{k}^{*}$ are
  both monic, the leading coefficient of the unit $u_{1} \cdots u_{k}$ is 1.
  The only unit with this property is 1, so $f = f_{1}^{*} f_{2}^{*} \cdots
  f_{k}^{*}$.  If each $f_{i}$ is irreducible, then so are the associates
  $f_{i}^{*}$; they cannot be constant, since the only monic constant is 1, and
  units aren't considered irreducible.
\end{proof}

\section{Generalized Eisenstein Polynomials}

We begin by showing that while factorization in $R[x]$ may be non-unique, it is
at least possible.  We remind the reader that $(R, \f{m})$ is an Artinian local
ring with $\f{m} \neq 0$.

\begin{prop}
  Every polynomial of positive degree in $R[x]$ that is not a unit can be
  factored into a product of irreducible polynomials.
\end{prop}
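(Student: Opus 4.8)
The plan is to separate the argument according to whether $f$ is regular or a zero divisor, and to build factorizations by a well-founded induction in each case. Throughout I would use Proposition~\ref{prop:McDonald} to replace any regular factor by an associated \emph{monic} polynomial of the same degree as its image in $\ol{R}[x]$; since the leading coefficient of a monic polynomial is a unit, this guarantees that the degree of a product involving a monic factor is the sum of the degrees, the property that keeps the induction under control. I would also record three elementary facts: a product of regular polynomials is regular (so a regular $f$ can only factor into regular pieces); $\ol{g}\,\ol{h}=\ol{f}$ in the domain $\ol{R}[x]$ (so a zero divisor always has at least one zero-divisor factor); and a \emph{regular} nonunit $f$ necessarily has $\deg\ol{f}\ge 1$ (otherwise $f^{*}=1$ and $f$ would be a unit).

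First I would dispose of the regular case by induction on $\deg\ol{f}$. If $\deg\ol{f}=1$ then $f$ is irreducible, since a proper factorization $f=gh$ into regular nonunits would give $\deg\ol{g}+\deg\ol{h}=1$ with both summands $\ge 1$, which is impossible. For the inductive step, either $f$ is already irreducible or $f=gh$ with $g,h$ regular nonunits; then $\deg\ol{g},\deg\ol{h}\ge 1$ and $\deg\ol{g}+\deg\ol{h}=\deg\ol{f}$, so each factor has strictly smaller $\deg\ol{\cdot}$ and the inductive hypothesis finishes the job.

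The substantive case is that of a nonzero zero divisor $f$, where $\ol{f}=0$ and degree alone need not drop under factorization. Here I would exploit that $R$ is Artinian local, so $\m$ is nilpotent, say $\m^{t}=0$; for nonzero $g\in R[x]$ set $\nu(g)$ equal to the largest $k$ with all coefficients of $g$ lying in $\m^{k}$, so that $\nu(g)=0$ exactly when $g$ is regular, $1\le\nu(g)\le t-1$ when $g$ is a nonzero zero divisor, and $\nu(gh)\ge\nu(g)+\nu(h)$. Factorization of zero divisors would then follow by well-founded induction on the lexicographic pair $(\nu(f),\deg f)$. Given a proper factorization $f=gh$, any regular factor is already a product of irreducibles by the regular case, so it suffices to show that every zero-divisor factor has strictly smaller rank. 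If both factors are zero divisors this is immediate from $\nu(f)\ge\nu(g)+\nu(h)$, which forces $\nu(g),\nu(h)<\nu(f)$. If instead the companion factor is regular, I would first normalize it to a monic polynomial $g^{*}$ of degree $\ge 1$ and absorb the unit into the zero-divisor factor $h$; then $\deg f=\deg g^{*}+\deg h$ gives $\deg h<\deg f$, while $\nu(h)\le\nu(f)$, so the rank again drops (in the first coordinate if $\nu(h)<\nu(f)$, otherwise in the second). Collecting the irreducible, possibly constant, factors produced along the way yields the desired factorization. I expect this zero-divisor case to be the main obstacle, and in particular the regular-times-zero-divisor subcase, where the degree is controlled only after the monic normalization; note that when $\m^{2}=0$ the both-zero-divisors subcase cannot occur, since then any product of two zero divisors has coefficients in $\m^{2}=0$ and is therefore $0$, leaving only the regular-times-zero-divisor subcase.
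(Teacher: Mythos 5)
Your proof is correct, but it takes a genuinely different route from the paper's. The paper makes a single observation --- every zero divisor of $R[x]$ has all coefficients in $\m$, hence nilpotent, so $1-f$ is a unit whenever $f$ is a zero divisor --- which says exactly that $R[x]$ has \emph{harmless zero-divisors} in the sense of \cite[Definition 2.3]{FF}, and then quotes atomicity from \cite[Lemma 2.8]{FF}. You instead prove atomicity from scratch: regular nonunits are handled by induction on $\deg\ol{f}$, using Proposition~\ref{prop:McDonald} both to see that this degree is at least $1$ for a regular nonunit and that it is additive across factorizations; zero divisors are handled by a well-founded induction on the lexicographic pair $\left(\nu(f),\deg f\right)$, where your coefficient-depth function $\nu$ is well defined precisely because $\m$ is nilpotent in an Artinian local ring. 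The individual steps all check out: superadditivity $\nu(gh)\ge\nu(g)+\nu(h)$ disposes of the two-zero-divisor case, and in the mixed case the monic normalization $g^{*}$ makes degrees add, so the unit-absorbed factor $uh$ satisfies $\nu(uh)=\nu(h)\le\nu(f)$ and $\deg(uh)<\deg f$, hence has strictly smaller rank (note that absorbing the unit before comparing degrees is essential, since multiplication by a unit of $R[x]$ can change degree --- your argument does this correctly). What the paper's route buys is brevity and a pointer to a general principle in the literature; what yours buys is self-containedness, an explicit display of where the Artinian hypothesis enters, and a slightly stronger conclusion --- every nonzero nonunit of $R[x]$, including zero-divisor constants, is a product of irreducibles, not only the polynomials of positive degree. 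In effect you have re-proved the special case of the Frei--Frisch lemma that the paper cites, and your closing remark that the two-zero-divisor subcase is vacuous when $\m^{2}=0$ shows how much the argument simplifies in the setting the rest of the paper actually uses.
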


\begin{proof}
  Suppose $f(x) = a_dx^d + \cdots + a_1x + a_0$ is a zero divisor of $R[x]$.
  Then, for $0\le k\le d$, we have $a_k\in \m$, hence $a_k$ is nilpotent.  Now
  $$
  1 - f(x) = -a_dx^d - \cdots - a_1x + (1 - a_0)
  $$
  and $1 - a_0$ is a unit.  Therefore $1 - f(x)$ is a unit of $R[x]$.

  This shows that the ring $R[x]$ has \emph{harmless zero-divisors} using the
  terminology of Frei-Frisch \cite[Definition 2.3]{FF}.  Now the result follows
  from \cite[Lemma 2.8]{FF}.
\end{proof}

\begin{defn} 
  A \emph{generalized Eisenstein polynomial} (abbreviated \emph{GE~polynomial})
  is a non-constant monic polynomial $f(x) = x^{d} + f_{d - 1} x^{d - 1} +
  \cdots + f_{1} x + f_{0}$ with the property that $f_{i} \in \f{m}$ for each $i
  = 0, \ldots, d - 1$.  Equivalently, $f(x)$ is a GE~polynomial if $f(x)$ is
  non-constant, monic and $\ol{f(x)} = x^{d}$ in $\ol{R}[x]$, where $d = \deg
  f$.
\end{defn}

We note that $x^n$ is a GE~polynomial for any positive integer $n$.

\begin{lemma}\label{lemma:factor_of_GE_is_GE}
  Let $f$ and $g$ be monic, nonconstant polynomials in $R[x]$.  Then $fg$ is a
  GE~polynomial if and only if both $f$ and $g$ are GE~polynomials.
\end{lemma}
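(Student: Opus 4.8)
The plan is to prove this biconditional by working with the images in $\ol{R}[x]$, since the GE~property for a monic polynomial $f$ of degree $d$ is precisely the statement $\ol{f(x)} = x^d$. Both $f$ and $g$ are monic and nonconstant, so their product $fg$ is monic of degree $\deg f + \deg g$, and the image map is a ring homomorphism $R[x] \to \ol{R}[x]$ that respects products and preserves degree for monic polynomials (the leading coefficient $1$ does not go to $0$). Thus I would reduce everything to the single identity $\ol{fg} = \ol{f}\,\ol{g}$ in the polynomial ring $\ol{R}[x]$ over the residue \emph{field}.

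For the backward direction, I would assume $\ol{f} = x^{\deg f}$ and $\ol{g} = x^{\deg g}$. Then $\ol{fg} = \ol{f}\,\ol{g} = x^{\deg f} x^{\deg g} = x^{\deg f + \deg g} = x^{\deg(fg)}$, so $fg$ is a GE~polynomial. This direction is essentially immediate once the reduction to residues is in place.

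For the forward direction, I would assume $fg$ is a GE~polynomial, i.e.\ $\ol{fg} = x^{\deg(fg)}$ in $\ol{R}[x]$, and I must conclude $\ol{f} = x^{\deg f}$ and $\ol{g} = x^{\deg g}$. The key point is that $\ol{R}$ is a field, so $\ol{R}[x]$ is a unique factorization domain in which $x$ is prime. Since $\ol{f}\,\ol{g} = x^{\deg(fg)}$ and both $\ol{f}, \ol{g}$ are monic (inherited from $f, g$ being monic, as the reduction map sends the leading coefficient $1$ to $1$), unique factorization forces each of $\ol{f}$ and $\ol{g}$ to be a monic power of $x$; comparing degrees gives $\ol{f} = x^{\deg f}$ and $\ol{g} = x^{\deg g}$ exactly.

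The only subtlety I anticipate is the bookkeeping needed to guarantee that the reduction map does not drop the degree, so that $\deg \ol{f} = \deg f$ and the degree comparison is legitimate; this is where the monic hypothesis on $f$ and $g$ is used, since for a monic polynomial the leading coefficient $1 \in R$ reduces to $1 \in \ol{R} \neq 0$. No genuine obstacle arises here because the residue ring is a field and hence a domain, so unique factorization applies directly. I would present the argument as the chain of equivalences through $\ol{R}[x]$, invoking the GE~definition's equivalent monic-reduction formulation at the start so that the whole proof amounts to the statement that $x$ is prime in a polynomial ring over a field.
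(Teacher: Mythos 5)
Your proposal is correct and follows essentially the same route as the paper: pass to $\ol{R}[x]$ via the reduction map, get the forward ("both GE implies product GE") direction by multiplying $\ol{f}\,\ol{g} = x^{k+\ell}$, and get the converse from unique factorization in $\ol{R}[x]$ together with monicity to pin down $\ol{f} = x^{\deg f}$ and $\ol{g} = x^{\deg g}$. The paper's proof is just a more compressed version of the same argument, leaving the UFD step as "it follows easily."
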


\begin{proof}
  Assume both $f$ and $g$ are GE~polynomials; then $\ol{f} = x^k$ and
  $\ol{g} = x^\ell$ where $k$ and $\ell$ are the degrees of $f$ and $g$.  Thus
  $x^{k+\ell} = \ol{f}\ol{g} = \ol{fg}$, showing that $fg$ is a GE~polynomial.

  Conversely, if $f$ and $g$ are monic of degrees $k$ and $\ell$ respectively,
  then $\ol{f}\ol{g} = \ol{fg} = x^{k+\ell}$ since $fg$ is a GE~polynomial.
  Since $\ol{R}[x]$ is a UFD, it follows easily that $\ol{f}=x^k$ and
  $\ol{g}=x^{\ell}$.  Therefore both $f$ and $g$ are GE~polynomials.
\end{proof}

The next theorem is the reason for our terminology ``generalized Eisenstein
polynomial."

\begin{theorem}\label{T:Eisenstein}
  If $f$ is a GE~polynomial in $R[x]$ whose constant term is in $\f{m} \setminus
  \f{m}^{2}$, then $f$ is irreducible.
\end{theorem}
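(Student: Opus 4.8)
The plan is to argue by contradiction, mimicking the classical Eisenstein irreducibility criterion while using the monic-reduction results already established to handle the zero-divisor subtleties of $R[x]$. First I would observe that $f$ is genuinely a nonunit: it is monic and nonconstant, so its leading coefficient is $1 \notin \m$, which disqualifies it from the unit description in the Preliminaries. Thus, to prove irreducibility it suffices to show that in any factorization $f = gh$ one of $g, h$ is a unit. So I would suppose instead that $f = gh$ with both $g$ and $h$ nonunits and aim for a contradiction.

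The key preparatory step is to replace $g$ and $h$ by monic factors. Since $f$ is monic it is regular, hence so are $g$ and $h$, and Corollary~\ref{cor:monic} (applied with $k = 2$) produces monic polynomials $g^{*}, h^{*}$ with $f = g^{*} h^{*}$. Because $g^{*}$ and $h^{*}$ are associates of $g$ and $h$, each is a unit exactly when the corresponding original factor is, so $g^{*}$ and $h^{*}$ are still nonunits. A monic nonunit cannot be constant, since the only monic constant is $1$, which is a unit; hence both $g^{*}$ and $h^{*}$ are monic and nonconstant.

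Now I would invoke Lemma~\ref{lemma:factor_of_GE_is_GE}: since $f$ is a GE~polynomial and $f = g^{*} h^{*}$ with $g^{*}, h^{*}$ monic and nonconstant, both $g^{*}$ and $h^{*}$ are GE~polynomials. In particular their constant terms $g^{*}_{0}$ and $h^{*}_{0}$ both lie in $\m$. The constant term of a product is the product of the constant terms, so $f_{0} = g^{*}_{0} h^{*}_{0} \in \m^{2}$. This contradicts the hypothesis $f_{0} \in \m \setminus \m^{2}$, and the contradiction finishes the argument.

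I expect the only genuine obstacle to be the passage to monic factors: in a ring with zero divisors the degree of a product can drop and a factor of a monic polynomial need not itself be monic, so one cannot directly compare constant terms or apply the GE~lemma without first normalizing via Corollary~\ref{cor:monic}. Everything downstream—identifying the constant term of the product and landing it in $\m^{2}$—is then immediate. It is worth noting that this argument uses only that $R$ is local with $\m \neq 0$; the standing hypothesis $\m^{2} = 0$ plays no role here, consistent with the theorem appearing in this more general section.
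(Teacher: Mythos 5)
Your proposal is correct and follows essentially the same route as the paper: assume a factorization, normalize to monic factors via Corollary~\ref{cor:monic}, apply Lemma~\ref{lemma:factor_of_GE_is_GE} to make both factors GE~polynomials, and derive the contradiction from the constant term of the product lying in $\f{m}^{2}$. Your version is in fact slightly more careful than the paper's, since you explicitly verify that the monic factors are nonunits (hence nonconstant) before invoking the lemma, and you correctly note that the hypothesis $\f{m}^{2}=0$ is not needed here.
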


\begin{proof}
  Suppose by way of contradiction that there are two polynomials $g, h$ with
  $f = gh$.  By Corollary~\ref{cor:monic}, $f = g^{*}h^{*}$ for some monic
  polynomials $g^{*}$, $h^{*}$.  By Lemma~\ref{lemma:factor_of_GE_is_GE}, either
  $g^{*}$ and $h^{*}$ are GE polynomials, or one of them is constant.  If one of
  them is constant then it is a unit, and the proof is complete.  If both were
  nonconstant, then since they are GE polynomials, the product of their constant
  terms would be in $\f{m}^{2}$, and this would contradict the assumption on the
  constant term of $f$.
\end{proof}  
  
\begin{cor}\label{cor:2}
  Suppose $\f{m}^{2} = 0$.  If $f$ is a GE~polynomial in $R[x]$ with degree at
  least two, then $f$ is irreducible if and only if $f$ has a nonzero constant
  term.
\end{cor}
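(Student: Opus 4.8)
The plan is to prove the two implications separately, using the hypothesis $\m^{2} = 0$ to upgrade Theorem~\ref{T:Eisenstein} into the stated equivalence.

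First I would handle the direction asserting that a nonzero constant term forces irreducibility. Since $f$ is a GE~polynomial, its constant term $f_{0}$ already lies in $\m$ by definition. Because $\m^{2} = 0$, the difference $\m \setminus \m^{2}$ is exactly $\m \setminus \{0\}$. Hence if $f_{0} \neq 0$, then $f_{0} \in \m \setminus \m^{2}$, and Theorem~\ref{T:Eisenstein} immediately yields that $f$ is irreducible. This direction is essentially just the observation that $\m^{2}=0$ collapses the hypothesis of the previous theorem.

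For the converse I would argue the contrapositive: if the constant term vanishes, then $f$ is reducible. Writing $f = x^{d} + f_{d-1} x^{d-1} + \cdots + f_{1} x$ with $f_{0} = 0$ and $d \geq 2$, I factor out $x$ to obtain $f = x \cdot g$, where $g = x^{d-1} + f_{d-1} x^{d-2} + \cdots + f_{1}$ is monic of degree $d - 1 \geq 1$. Neither factor is a unit: the factor $x$ has constant term $0$, which is not a unit of $R$, and $g$ is monic and nonconstant, so its leading coefficient $1$ does not lie in $\m$, violating the coefficient criterion for units recalled in Section~\ref{S:Preliminaries}. Thus $f$ is exhibited as a product of two nonunits and so is not irreducible.

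I do not anticipate a genuine obstacle here, since the corollary is really a direct specialization of Theorem~\ref{T:Eisenstein} together with the visible divisibility of a zero-constant-term GE~polynomial by $x$. The only point warranting a moment of care is confirming that both $x$ and $g$ genuinely fail to be units under the paper's definition, which I would verify against the coefficient conditions for a unit polynomial stated in the preliminaries.
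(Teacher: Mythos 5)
Your proposal is correct and follows essentially the same route as the paper: apply Theorem~\ref{T:Eisenstein} after noting that $\m^{2}=0$ makes any nonzero element of $\m$ lie in $\m\setminus\m^{2}$, and for the converse factor out $x$ and check that both $x$ and the monic cofactor are nonunits. The only (immaterial) difference is that you certify the cofactor is a nonunit via its leading coefficient, while the paper points to its low-order coefficient lying in $\m$; both verifications are valid under the unit criterion from Section~\ref{S:Preliminaries}.
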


\begin{proof}
  If $f$ is a GE~polynomial with a nonzero constant term, then the constant term
  is in $\f{m} \setminus \f{m}^{2}$ since $\f{m}^{2} = 0$.  According to
  Theorem~\ref{T:Eisenstein}, $f$ is irreducible.

  If the constant term of $f$ is zero then
  \[
    f = x^{d} + a_{d - 1} x^{d - 1} + \cdots + a_{j} x^{j} = x (x^{d - 1} + a_{d
      - 1} x^{d - 2} + \cdots + a_{j} x^{j - 1})
  \]
  where $d \geq 2$ and $1 \leq j \leq d$.  
  The factorization displayed above is a factorization into a product of two
  non-units, since $a_{j} \in \m$.  Therefore, if the constant term of $f$ is
  zero, then $f$ is reducible.
\end{proof}

\begin{remark} \label{remark:counting} Let $(R, \m)$ be a finite local ring such
  that $\m^2=0$ and let $k = |\m|$. By Corollary \ref{cor:2}, the number of
  irreducible GE~polynomials of degree $2$ in $R[x]$ is exactly $k(k-1)$.  We
  will use this remark later in Lemma \ref{lemma:card4}.
\end{remark}

The central idea of the following proof for the case $k = 2$ was inspired by the
computations done at the start of \cite{FF}.

\begin{prop} \label{Megaprop} Suppose $\m^2=0$.  If $k\ge 2$ and $f_1, f_2,
  \ldots, f_k$ are GE polynomials in $R[x]$ with $\deg(f_i) = d_i$ and $d_1 \ge
  d_2 \ge \cdots \ge d_k$ then there is a GE polynomial $h$ of degree $d_1$ such
  that $f_1 f_2 \cdots f_k = hx^{d_2+d_3 + \cdots + d_k}$.  If, furthermore,
  $f_1$ is irreducible and $d_1 > d_2$, then $h$ is irreducible and $\ord(
  \prod_{i=1}^k f_i ) = \sum_{i=2}^k d_i$.
\end{prop}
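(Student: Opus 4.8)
The plan is to exploit the hypothesis $\m^2 = 0$ to linearize the product. I would write each GE~polynomial as $f_i = x^{d_i} + g_i$, where $g_i \in R[x]$ has all of its coefficients in $\m$ and degree strictly less than $d_i$. Because every coefficient of every $g_i$ lies in $\m$ and $\m^2 = 0$, any product $g_i g_j$ (and more generally any product of two or more of the $g_i$) is identically zero. Expanding $\prod_{i=1}^k (x^{d_i} + g_i)$ by choosing, from each factor, either $x^{d_i}$ or $g_i$, I would discard every term in which $g_i$ is chosen from two or more factors, since each such term contains a vanishing product. What survives is
\[
  \prod_{i=1}^k f_i = x^{D} + \sum_{i=1}^k g_i\, x^{D - d_i}, \qquad D = d_1 + \cdots + d_k .
\]
This identity is the computational heart of the argument.

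Next I would factor out the claimed power of $x$. Setting $m = d_2 + \cdots + d_k = D - d_1$, observe that $D - d_i \ge D - d_1 = m$ for every $i$, because $d_1$ is the largest of the degrees; hence $x^m$ divides each summand above. Factoring it out yields $\prod_{i=1}^k f_i = x^m h$ with
\[
  h = x^{d_1} + \sum_{i=1}^k g_i\, x^{d_1 - d_i}.
\]
Since $\deg g_i < d_i$, each term $g_i x^{d_1 - d_i}$ has degree strictly less than $d_1$, so $h$ is monic of degree exactly $d_1$; and every non-leading coefficient of $h$ lies in $\m$ because each $g_i$ does. Thus $h$ is a GE~polynomial of degree $d_1$, which settles the first assertion.

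For the second assertion, assume $f_1$ is irreducible and $d_1 > d_2$. The key is to locate the constant term of $h$. Since $d_1 > d_2 \ge d_i$ for all $i \ge 2$, the exponent $d_1 - d_i$ is positive for every $i \ge 2$, so those summands are divisible by $x$ and contribute nothing to the constant term; consequently the constant term of $h$ equals that of $g_1$, which (as $d_1 \ge 2$) is the constant term of $f_1$. Because $f_1$ is an irreducible GE~polynomial of degree $d_1 \ge 2$, Corollary~\ref{cor:2} forces its constant term to be nonzero, so $h$ has nonzero constant term. Then $\ord(h) = 0$, whence $\ord\!\left(\prod_{i=1}^k f_i\right) = \ord(x^m h) = m = \sum_{i=2}^k d_i$; and since $h$ is a GE~polynomial of degree $d_1 \ge 2$ with nonzero constant term, Corollary~\ref{cor:2} shows that $h$ is irreducible.

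I expect the main obstacle to be purely bookkeeping rather than conceptual: one must verify carefully that, after the expansion, only the single-$g_i$ ``linear'' corrections survive, and then keep the exponents straight when factoring out $x^m$ (in particular confirming that no summand $g_i x^{d_1 - d_i}$ reaches degree $d_1$, so that $h$ remains monic of degree $d_1$). Once the product is presented in this linearized form $x^m h$, both conclusions drop out immediately from Corollary~\ref{cor:2}, the hypothesis $d_1 > d_2$ serving only to guarantee that the constant term of the product's non-$x^m$ part comes entirely from $f_1$.
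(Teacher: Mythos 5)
Your proof is correct, and it reaches the conclusion by a somewhat different route than the paper's. The algebraic heart is identical --- write $f_i = x^{d_i} + g_i$ with $g_i \in \m[x]$ and use $\m^2 = 0$ to annihilate all products of two or more of the $g_i$ --- and both arguments close by applying Corollary~\ref{cor:2} to deduce irreducibility from the constant term. But the paper organizes this as an induction on $k$: it verifies the case $k=2$ by exactly your computation, then in the inductive step absorbs $f_{k+1}$ into the previously constructed polynomial via another application of the two-factor case, threading the hypotheses ``$f_1$ irreducible, $d_1 > d_2$'' through each stage. You instead expand the whole product in one multinomial pass and discard every term containing two or more $g_i$'s, obtaining the closed form
\[
  \prod_{i=1}^k f_i = x^m h, \qquad h = x^{d_1} + \sum_{i=1}^k g_i\, x^{d_1 - d_i}, \qquad m = \sum_{i=2}^k d_i .
\]
What your version buys is transparency: monicity of $h$, the GE property, the identification of the constant term of $h$ with that of $f_1$ (when $d_1 > d_2$), and the order computation $\ord(x^m h) = m$ can all be read off the explicit formula, with no inductive bookkeeping. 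What the paper's version buys is that one only ever verifies a two-factor identity, at the cost of carrying the irreducibility hypothesis through the induction. One small point you glossed over: invoking Corollary~\ref{cor:2} on $f_1$ and on $h$ requires degree at least two; this does hold, since $d_1 > d_2 \geq 1$ (GE~polynomials are nonconstant by definition), but it deserves a sentence.
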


\begin{proof}
  We use induction on $k$.  Suppose $k=2$.  We have $f_1 = x^{d_1} +
  \tilde{f_1}$ and $f_2 = x^{d_2} + \tilde{f_2}$ where $\tilde{f_{1}},
  \tilde{f_{2}} \in \f{m}[x]$ have degrees less than $d_{1}, d_{2}$,
  respectively. Therefore $f_1f_2 = (x^{d_1} + \tilde{f_1}) (x^{d_2} +
  \tilde{f_2}) = x^{d_1+d_2} + x^{d_1}\tilde{f_2} + x^{d_2}\tilde{f_1} +
  \tilde{f_1}\tilde{f_2}$. Since $\m^2=0$ we have $\tilde{f_1}\tilde{f_2} = 0$,
  so
  \begin{align*}
    f_1f_2 &= x^{d_1+d_2} + x^{d_1}\tilde{f_2} + x^{d_2}\tilde{f_1} \\
    &= (x^{d_1} + x^{d_1-d_2}\tilde{f_2} + \tilde{f_1}) x^{d_2}
  \end{align*}
  and the polynomial $h = x^{d_1} + x^{d_1-d_2}\tilde{f_2} + \tilde{f_1}$ is a
  GE polynomial of degree $d_1$.  If, furthermore, $f_{1}$ is irreducible and
  $d_{1} > d_{2}$, then $h$ is irreducible by Lemma~\ref{cor:2}, since $h$ and
  $f_{1}$ have the same constant term.  Finally, because $\tilde{f_1}$ has a
  nonzero constant term, we have $\ord(f_1f_2) = d_2$.

  Now suppose $k\ge 2$ and assume $f_1\cdots f_k = x^{d_2+\cdots +d_k}h_1$ where
  $h_1$ is a GE polynomial with $\deg(h_1) = d_1$, and if $f_{1}$ is irreducible
  with $d_{1} > d_{2}$, then $h_{1}$ is irreducible.  Then
  \begin{align*}
    \prod_{i=1}^{k+1} f_i &= f_1\cdots f_k f_{k+1} \\
    &= (h_1 x^{d_2+\cdots +d_k})f_{k+1} \\
    &= x^{d_2+\cdots +d_k}(h_1f_{k+1}) \\
    &= x^{d_2+\cdots +d_k}(h x^{d_{k+1}})\\
  \end{align*}
  for some GE polynomial $h$ of degree $d_1$ by the $k=2$ case, and if $f_{1}$
  is irreducible with $d_{1} > d_{2}$, then $h$ is irreducible. Therefore
  $\prod_{i=1}^{k+1} f_i = h x^{d_2+\cdots +d_k+ d_{k+1}}$.  Furthermore, if
  $f_{1}$ is irreducible with $d_{1}>d_{2}$, then $h$ is irreducible, and so it
  has a nonzero constant term. Therefore $\ord\left(\prod_{i=1}^{k+1} f_i\right)
  = d_2+\cdots + d_k + d_{k+1}$.  This completes the proof by induction.
\end{proof}




\section{Sets of Lengths of $x^n$}
We begin with the definition of the \emph{set of lengths} of an element.

\begin{defn} Let $R$ be a commutative Artinian local ring with identity and let
  $f \in R[x]$.  We say that a positive integer $n$ is a \textbf{length} of $f$
  if $f$ factors into a product of $n$ irreducible polynomials in $R[x]$.  We
  define the set
$$L(f) = \{ n\ |\ n\mbox{ is a length of $f$} \}$$
to be the \textbf{set of lengths of $f$}.
\end{defn}

\begin{remark}
To say that $1\in L(f)$ means precisely that $f$ is irreducible, and
in this case $L(f) = \{1\}$.  If $R$ is a unique factorization domain,
then $L(f)$ is a singleton for any polynomial $f\in R[x]$.  
Of course $n\in L(x^n)$, and if $R$ is a UFD then $L(x^n) = \{n\}$.
\end{remark}


A regular polynomial of degree $n$ cannot have length greater than $n$,
according to the next lemma.  In fact, after we establish the next three lemmas,
we will be able to determine the set of lengths of $x^n$ for $n\le 5$.

\begin{lemma} \label{lemma:5} If $f$ is a regular polynomial in $R[x]$ of degree
  $n$, then $L(f)\subseteq \{1, 2, \ldots, n\}$.
\end{lemma}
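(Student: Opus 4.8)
The plan is to fix an arbitrary factorization of $f$ into irreducibles, say $f = f_1 f_2 \cdots f_k$, and to prove that $k \le n$; since every length is by definition a positive integer, establishing this upper bound gives exactly $L(f) \subseteq \{1, 2, \ldots, n\}$. The whole argument will take place after reducing modulo $\m$, where $\ol{R}[x]$ is a polynomial ring over a field and therefore a domain in which degree is additive. Passing the factorization into $\ol{R}[x]$ gives $\ol{f} = \ol{f_1}\,\ol{f_2} \cdots \ol{f_k}$, and the two facts I will combine are that $\deg \ol{f} \le \deg f = n$ (reduction can only lower the degree, when the leading coefficient falls into $\m$) and that each factor $\ol{f_i}$ contributes at least $1$ to the degree count.

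The crux is therefore the claim that each $\ol{f_i}$ is nonconstant, i.e. $\deg \ol{f_i} \ge 1$. To see this I would first observe that each $f_i$ is regular, since a factor of the regular polynomial $f$ is regular. This lets me apply Proposition~\ref{prop:McDonald} to write $f_i = u_i f_i^{*}$ with $u_i$ a unit and $f_i^{*}$ monic of degree $\deg \ol{f_i}$. If $\ol{f_i}$ were constant, then $f_i^{*}$ would be a monic constant, hence equal to $1$, forcing $f_i = u_i$ to be a unit; this contradicts the irreducibility of $f_i$, since units are not irreducible. Thus $\deg \ol{f_i} = \deg f_i^{*} \ge 1$ for every $i$.

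With the claim in hand the conclusion is immediate from additivity of degree in the domain $\ol{R}[x]$:
\[
  n \;=\; \deg f \;\ge\; \deg \ol{f} \;=\; \sum_{i=1}^{k} \deg \ol{f_i} \;\ge\; \sum_{i=1}^{k} 1 \;=\; k.
\]
Hence $k \le n$, and since $k$ is a positive integer this proves $L(f) \subseteq \{1, 2, \ldots, n\}$. The main obstacle is precisely the nonconstancy of the reductions $\ol{f_i}$: a priori an irreducible factor could have all of its leading coefficients sitting in $\m$, which would make its image in $\ol{R}[x]$ a constant and break the degree count. It is exactly here that regularity is essential, and Proposition~\ref{prop:McDonald} is the tool that converts regularity plus non-unitness into the degree bound $\deg \ol{f_i} \ge 1$.
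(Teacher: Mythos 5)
Your proof is correct and takes essentially the same route as the paper's: both arguments observe that each irreducible factor of the regular polynomial $f$ is itself regular, apply Proposition~\ref{prop:McDonald} to conclude that each factor is a unit times a monic polynomial of positive degree (positive because an irreducible cannot be a unit), and then bound the number of factors $k$ by a degree count giving $k \le n$. The only difference is in the bookkeeping: you perform the degree count in $\ol{R}[x]$, using that degree is additive there since $\ol{R}[x]$ is a domain, while the paper counts degrees of the monic polynomials $f_i^{*}$ directly in $R[x]$; both versions are valid.
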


\begin{proof}
  If $f$ is a unit, then $L(f) = \emptyset$ since irreducibles aren't units and
  a product of nonunits can't be a unit; now assume $f$ is not a unit.  Suppose
  $k\in L(f)$; then there are irreducible polynomials $f_1, \ldots, f_k$ in
  $R[x]$ such that $f=f_1\cdots f_k$.  Each $f_{i}$ must be regular, since $f$
  is, and thus each $f_{i}$ has positive degree, since the only regular
  constants are units.  For each $i=1, \ldots, k$, we have $f_i = u_i f_i^{*}$
  for some unit $u_i$ and some monic $f_i^{*}$ in $R[x]$, by Proposition
  \ref{prop:McDonald}; since $f_{i}$ is not a unit, $f_{i}^{*}$ has positive
  degree.  We have $f=f_1\cdots f_k = u_1\cdots u_k f_1^{*} \cdots f_k^{*}$ and
  thus
  $k \le \sum_{i=1}^k \deg(f_i^{*}) \le \deg(u_1\cdots u_k f_1^{*} \cdots
  f_k^{*}) = \deg(f) = n$.
\end{proof}

The assumption that $f$ is a regular polynomial is necessary in Lemma
\ref{lemma:5}: If $R=\Z_{4}$ then the constant polynomial $2\in R[x]$ is
irreducible.  Hence for the polynomial $f=2x$ of degree $1$ we have $2\in L(f)$.

\begin{lemma}\label{lemma:neg}
  Suppose $\m^2=0$.  If $n$ is a positive integer then $n-1 \not\in L(x^n)$, and
  if $n$ is odd then $2\not\in L(x^n)$.
\end{lemma}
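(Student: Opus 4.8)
The plan is to use Proposition~\ref{Megaprop} as the main engine for both assertions, since it computes the order of a product of GE~polynomials precisely when the largest degree is strictly larger than the others. Note first that by Corollary~\ref{cor:monic} any factorization of $x^n$ into irreducibles may be taken to be a factorization into monic irreducibles, and by Lemma~\ref{lemma:factor_of_GE_is_GE} each such factor is then a GE~polynomial. So I may assume throughout that a length-$k$ factorization $x^n = f_1 f_2 \cdots f_k$ has each $f_i$ a monic irreducible GE~polynomial of degree $d_i \ge 1$, with $\sum d_i = n$.

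For the first claim, that $n-1 \notin L(x^n)$, I would argue by contradiction. Suppose $x^n = f_1 \cdots f_{n-1}$ with the $f_i$ as above. Since the degrees $d_i$ are positive integers summing to $n$ over $n-1$ factors, exactly one factor has degree $2$ and the rest have degree $1$; reindex so $d_1 = 2 \ge d_2 = \cdots = d_{n-1} = 1$. The key point is that every monic GE~polynomial of degree $1$ is $x + m$ with $m \in \m$, and a degree-$1$ GE~polynomial is irreducible only when it is $x$ itself: indeed $x + m = x(1) + m$ is never a nontrivial product, but the real issue is whether the degree-$2$ factor $f_1$ can be irreducible. By Corollary~\ref{cor:2}, an irreducible GE~polynomial of degree $\ge 2$ has nonzero constant term. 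But then in the product $f_1 \cdots f_{n-1}$, applying Proposition~\ref{Megaprop} with $d_1 = 2 > 1 = d_2$ gives $\ord(f_1\cdots f_{n-1}) = \sum_{i=2}^{n-1} d_i = n-2 < n$, contradicting $\ord(x^n) = n$. Hence no such factorization exists, so $n-1 \notin L(x^n)$.

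For the second claim, that $2 \notin L(x^n)$ when $n$ is odd, I would again suppose $x^n = f_1 f_2$ with $f_1, f_2$ monic irreducible GE~polynomials of degrees $d_1 \ge d_2$ and $d_1 + d_2 = n$. Since $n$ is odd, $d_1 \ne d_2$, so $d_1 > d_2 \ge 1$. Both $f_1$ and $f_2$ have degree $\ge 1$; if $d_2 \ge 2$ then $f_2$ being irreducible forces its constant term nonzero by Corollary~\ref{cor:2}, and likewise $f_1$ has nonzero constant term, so the constant term of $f_1 f_2 = x^n$ would be a nonzero product in $\m^2 = 0$, a contradiction. So I must have $d_2 = 1$, forcing $f_2 = x$ (the only irreducible degree-$1$ GE~polynomial). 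But then applying Proposition~\ref{Megaprop} with $d_1 > d_2 = 1$ yields $\ord(f_1 f_2) = d_2 = 1 \ne n$ (as $n \ge 3$ when $n$ is odd and $2 \in L(x^n)$ is claimed), again contradicting $\ord(x^n) = n$.

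The step I expect to require the most care is pinning down the degree-$1$ irreducible GE~polynomials and the constant-term bookkeeping: I must be precise that the only monic irreducible GE~polynomial of degree $1$ is $x$, and that any irreducible GE~polynomial of degree $\ge 2$ has nonzero constant term (Corollary~\ref{cor:2}), so that the order computation in Proposition~\ref{Megaprop} applies cleanly. Once the irreducible factors are correctly classified, both contradictions follow from comparing $\ord(x^n) = n$ against the order formula, whose hypothesis $d_1 > d_2$ is guaranteed in the first claim by the pigeonhole count and in the second claim by the parity of $n$.
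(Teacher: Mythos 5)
Your proof of the first assertion ($n-1 \notin L(x^n)$) is correct and is in fact more direct than the paper's: you apply the order formula of Proposition~\ref{Megaprop} to the whole product (degrees $2,1,\ldots,1$, so $d_1 = 2 > 1 = d_2$ and $\ord(f_1\cdots f_{n-1}) = n-2 \neq n = \ord(x^n)$), whereas the paper reduces to a factorization $x^3 = f_1 h$ and invokes the already-proved fact that $2 \notin L(x^3)$. You should still dispose of $n = 1, 2$ separately, since Proposition~\ref{Megaprop} requires at least two factors; for $n = 2$ the claim is simply that $x^2 = x \cdot x$ is reducible.

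The second assertion is where there is a genuine error. To eliminate the case $d_2 \geq 2$ you argue that the constant term of $f_1 f_2 = x^n$ ``would be a nonzero product in $\m^2 = 0$, a contradiction.'' There is no contradiction: the product of the two nonzero constant terms lies in $\m^2 = 0$ and hence equals $0$, which agrees perfectly with the zero constant term of $x^n$. Note that this step of yours nowhere uses that $n$ is odd, so if it were valid it would equally ``prove'' that $(x^2+m)(x^2-m) = x^4$ is impossible --- yet that is a genuine factorization of $x^4$ into two irreducibles (Lemma~\ref{lemma:6}). A second false assertion is that $x$ is the only monic irreducible GE~polynomial of degree $1$: every monic linear polynomial $x+m$ is irreducible, since by Corollary~\ref{cor:monic} any factorization of it into nonunits would yield monic factors with degrees summing to $1$, forcing one factor to be the unit $1$. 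So the reduction ``forcing $f_2 = x$'' is unjustified. Fortunately both detours are unnecessary: the order formula of Proposition~\ref{Megaprop} needs only that $f_1$ is irreducible and $d_1 > d_2$, which the parity of $n$ already guarantees, and it gives $\ord(f_1 f_2) = d_2 < n = \ord(x^n)$ at once --- this is exactly the paper's argument. Your proof of the second part becomes correct if you delete the case analysis and apply Proposition~\ref{Megaprop} directly to the pair $f_1, f_2$.
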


\begin{proof}
  We prove the second part first.  Suppose, to get a contradiction, that
  $2\in L(x^n)$ for some odd positive integer $n$.  By Lemma \ref{lemma:5} we
  must have $n\ge 3$; thus there are irreducible nonconstant monic polynomials
  $f$ and $g$ such that $x^n = fg$.  By Lemma \ref{lemma:factor_of_GE_is_GE},
  both $f$ and $g$ are GE~polynomials.  Since $n$ is odd, $\deg(f)\ne \deg(g)$,
  so without loss of generality we assume $\deg(f)>\deg(g)$.  By Proposition
  \ref{Megaprop} there is an irreducible GE~polynomial $h$ such that
  $x^n=fg = hx^{\deg(g)}$, so
  $n=\ord(x^{n}) = \ord(fg) = \deg(g) = n - \deg(f)$, contradicting $f$
  nonconstant. This shows $2\not\in L(x^n)$ if $n$ is odd.

  Now we prove the first part.  Since $x^2$ is reducible, $1\not\in L(x^2)$.  We
  have just shown $2\not\in L(x^3)$.  Suppose, to get a contradiction, $n-1\in
  L(x^n)$ for some integer $n\ge 4$; then there are irreducible, nonconstant,
  monic GE~polynomials $f_1, f_2, \ldots, f_{n-1}$ such $x^{n} = f_1 f_2 \cdots
  f_{n-1}$.  Since $\deg(f_1 f_2 \cdots f_{n-1}) = n$, exactly one $f_i$ has
  degree $2$ and the rest are linear.  Without loss of generality, assume
  polynomials $f_2$ through $f_{n-1}$ are linear and $f_1$ has degree two.  By
  Proposition \ref{Megaprop}, $f_2\cdots f_{n-1} = hx^{n-3}$ where $h$ is a
  linear GE~polynomial.  Thus $x^n = f_1 h x^{n-3}$, which implies $x^3 = f_1
  h$. This is a contradiction, since if $x^3 = f_1h$ then $2\in L(x^3)$.
  Therefore $n-1\not\in L(x^n)$
\end{proof}


\begin{lemma}\label{lemma:6}
  Suppose $\m^2=0$.  Let $q$ be an irreducible GE polynomial in $R[x]$.  For any
  integer $n\ge 2$,
\begin{enumerate}
\item  If $n$ is even, then $\{2, 4, 6, \ldots, n-2, n\}\subseteq L(q^n)$.
\item  If $n$ is odd, then $\{3, 5, 7, \ldots, n-2, n\}\subseteq L(q^n)$.
\end{enumerate}
\end{lemma}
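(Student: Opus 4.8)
The plan is to prove both statements by producing, for each target length $\ell$ in the claimed set, an explicit factorization of $q^n$ into $\ell$ irreducible factors. Throughout, set $d = \deg q \ge 2$ and recall from Corollary~\ref{cor:2} that $q$ is irreducible precisely because it has nonzero constant term. The key structural tool is Proposition~\ref{Megaprop}: whenever we multiply GE~polynomials together under the hypothesis $\m^2 = 0$, the product collapses into a single GE~polynomial times a power of $x$. I would first record the endpoints: $n \in L(q^n)$ is immediate, since $q^n$ is literally a product of $n$ copies of the irreducible $q$. Similarly, $2 \in L(q^n)$ when $n$ is even (respectively $3 \in L(q^n)$ when $n$ is odd) should follow from grouping the $n$ copies of $q$ into two (respectively three) balanced chunks and invoking Proposition~\ref{Megaprop} to rewrite each chunk.

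The heart of the argument is the inductive step that decreases the length by $2$ at a time, which is what produces the arithmetic progression $\{n, n-2, n-4, \ldots\}$ down to $2$ or $3$. The idea I would pursue is that if $q^n$ has a factorization into $\ell$ irreducible factors, then I can merge two of those factors into a single irreducible factor, thereby obtaining a factorization of length $\ell - 2$; more precisely, given a factorization of length $\ell$, I expect to replace two factors by one irreducible factor and pick up no extra factor, but the bookkeeping with the $x$-power is the subtle point. Concretely: take two GE~polynomials $f_1, f_2$ among the current factors, say with $\deg f_1 > \deg f_2$ and $f_1$ irreducible; by Proposition~\ref{Megaprop} their product is $h x^{\deg f_2}$ with $h$ an irreducible GE~polynomial of degree $\deg f_1$. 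The power $x^{\deg f_2}$ splits as $\deg f_2$ copies of the linear irreducible $x$. So the replacement trades two factors ($f_1, f_2$) for $1 + \deg f_2$ factors ($h$ together with $\deg f_2$ copies of $x$), which changes the count by $\deg f_2 - 1$. To get a clean decrement of $2$ I would instead seed the induction with the right building blocks and control degrees carefully.

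A cleaner route, and the one I would actually carry out, is to build each factorization directly rather than relating consecutive lengths. Fix $\ell$ with the correct parity and $2 \le \ell \le n$. I want to write $q^n$ as a product of $\ell$ irreducible factors. The natural ansatz is to let one factor be an irreducible GE~polynomial of large degree absorbing the ``content'' of several copies of $q$ (via Proposition~\ref{Megaprop}), and let the remaining factors be copies of the linear polynomial $x$, which is irreducible. Specifically, grouping $m$ copies of $q$ gives, by repeated application of Proposition~\ref{Megaprop}, a factorization $q^m = h\, x^{(m-1)d}$ where $h$ is an irreducible GE~polynomial of degree $d$ (here I use that all $d_i$ are equal to $d$ but the irreducibility clause of Proposition~\ref{Megaprop} requires strict inequality of top degrees, so this merging needs care and is where I expect the main difficulty). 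The total length from such a grouping is $1 + (m-1)d$ for the merged block plus $(n - m)$ untouched copies of $q$, and by varying how many copies I merge I should be able to hit every value of the required parity in the interval.

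The step I expect to be the genuine obstacle is the irreducibility of the merged factor $h$. Proposition~\ref{Megaprop} guarantees $h$ is irreducible only under the hypothesis $d_1 > d_2$, i.e.\ a strict gap between the two largest degrees, whereas naively merging equal-degree copies of $q$ violates this. To handle this I would arrange the merges asymmetrically: rather than multiplying equal-degree copies, I would first form one irreducible GE~polynomial of strictly larger degree (for instance $q^2 = h' x^{d}$ may fail to give irreducible $h'$, so instead I multiply an already-merged block of strictly larger degree against a single $q$), maintaining the strict degree inequality at each application so the irreducibility clause of Proposition~\ref{Megaprop} applies. Getting this ordering right, so that at every stage the accumulated factor strictly dominates the incoming factor in degree while the resulting length and $x$-power land on the desired parity and range, is the crux; once the strict-inequality invariant is maintained, Corollary~\ref{cor:2} and Proposition~\ref{Megaprop} deliver irreducibility of each merged factor and of the leftover $x$'s, completing the containment.
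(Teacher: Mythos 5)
There is a genuine gap, and it is fatal to the whole approach: every factorization your merging mechanism can produce has length at least $n$, so it can never exhibit any of the lengths $2,3,\ldots,n-2$ that are the actual content of the lemma. Concretely, Proposition~\ref{Megaprop} rewrites a product of GE~polynomials as $h\,x^{d_2+\cdots+d_k}$ where $\deg h = d_1$ is the \emph{largest} input degree; applied to copies of $q$, every factor it outputs ($h$ and the copies of $x$) has degree at most $d=\deg q$. Since the degrees in any factorization of $q^n$ into monic irreducibles sum to $nd$, a factorization all of whose factors have degree $\le d$ has at least $n$ factors. Your own count confirms this: merging $m$ copies gives length $1+(m-1)d+(n-m) = n+(m-1)(d-1) \ge n$. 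The same problem kills your endpoint claim: $q^{n/2}\cdot q^{n/2}$ is not a length-$2$ factorization because $q^{n/2}$ is reducible, and invoking Proposition~\ref{Megaprop} on it only re-expands it into more factors. Moreover, your proposed fix --- ``first form one irreducible GE~polynomial of strictly larger degree'' and then merge asymmetrically --- is impossible by this route, since the merged polynomial $h$ never has degree exceeding the largest degree of its inputs; so the obstacle you identified (the strict-inequality hypothesis in the irreducibility clause) is not the real one.

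The missing idea is a perturbation (difference-of-squares) trick, which is what the paper uses: since $\m^2=0$, for any nonzero $m\in\m$ one has $(q^{j}+m)(q^{j}-m)=q^{2j}-m^2=q^{2j}$. For $j\ge 2$ the constant term of $q^{j}$ lies in $\m^2=0$, so $q^{j}\pm m$ is a GE~polynomial of degree at least $2$ with nonzero constant term, hence irreducible by Corollary~\ref{cor:2}. This manufactures irreducible factors of \emph{large} degree, which is exactly what is needed to shorten factorizations. Given $k$ of the same parity as $n$ with $2\le k\le n-2$, set $j=\tfrac{n-k+2}{2}$ (an integer $\ge 2$); then
\[
q^{n} \;=\; q^{k-2}\,\bigl(q^{j}+m\bigr)\bigl(q^{j}-m\bigr)
\]
is a product of $k$ irreducible polynomials, so $k\in L(q^{n})$, and $n\in L(q^{n})$ trivially. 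A smaller misstep worth noting: you assume $\deg q\ge 2$, but the lemma allows linear $q$ (indeed $q=x$ is the case needed for the paper's main theorem), and for $d=1$ your merged lengths all collapse to exactly $n$, so your construction then produces nothing at all, whereas the perturbation argument above works uniformly in $\deg q$.
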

\begin{proof}
  Suppose $n$ is even.  Since $q$ is irreducible, $n\in L(q^n)$.  Let $k$ be any
  even integer such that $2\le k< n$; we will find a factorization of $q^n$ with
  length $k$.  Let $m$ be any nonzero element of the maximal ideal $\m$ and
  consider the factorization
\begin{equation}\label{eqn:1}
(q^{\frac{n-k+2}{2}}+m) 
(q^{\frac{n-k+2}{2}}-m) = q^{n-k+2} 
\end{equation}
Since $n-k\ge 2$, $\frac{n-k+2}{2}\ge 2$, hence $q^{\frac{n-k+2}{2}}$ is a
reducible GE~polynomial; by Corollary \ref{cor:2}, $q^{\frac{n-k+2}{2}}$ has
constant $0$, so $q^{\frac{n-k+2}{2}} + m$ is irreducible.  Similarly
$q^{\frac{n-k+2}{2}}-m$ is also irreducible.  Multiplying both sides of equation
(\ref{eqn:1}) by $q^{k-2}$ yields
$q^{k - 2}(q^\frac{n - k + 2}{2} + m)(q^\frac{n - k + 2}{2} - m) = q^n$.  Hence
we have a product of $k$ irreducible factors equal to $q^n$ for any even $k$
such that $2\le k < n$.  Therefore,
$\{2, 4, 6, \ldots, n-2, n\}\subseteq L(q^n)$.

If $n$ is odd, the proof follows the same argument and factorization as above,
except this time $n$ and $k$ are both odd integers.
\end{proof}

\begin{prop}\label{prop:lengths<6} Suppose $\m^2=0$.  In $R[x]$ we have

\hfill $L(x)=\{1\}$ \hfill $L(x^2)=\{2\}$ \hfill $L(x^3)=\{3\}$ 
\hfill $L(x^4)=\{2, 4\}$ \hfill $L(x^5)=\{3,5\}$ \hspace*{\fill}

\end{prop}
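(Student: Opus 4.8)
The plan is to pin down each $L(x^n)$ for $2 \le n \le 5$ by trapping it between a single upper bound and a collection of membership facts, all of which are already available. The key observation that lets the machinery run is that $x$ is itself an irreducible GE~polynomial: it is monic and nonconstant with $\ol{x} = x$, and a degree-one monic polynomial cannot factor as a product of two nonunits. Consequently Lemma~\ref{lemma:6} applies verbatim with $q = x$, since every power $x^m$ has zero constant term. The case $L(x) = \{1\}$ is immediate, as $x$ is irreducible of degree $1$ and Lemma~\ref{lemma:5} forces $L(x) \subseteq \{1\}$.

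For $2 \le n \le 5$ I would first record the upper bound $L(x^n) \subseteq \{1, 2, \ldots, n\}$ from Lemma~\ref{lemma:5}, since $x^n$ is a regular (indeed monic) polynomial of degree $n$. On the membership side, $n \in L(x^n)$ always, from $x^n = x \cdots x$, and Lemma~\ref{lemma:6} supplies the remaining achievable lengths of the correct parity: for $n$ even it yields $\{2, 4, \ldots, n\} \subseteq L(x^n)$, and for $n$ odd it yields $\{3, 5, \ldots, n\} \subseteq L(x^n)$. Specializing gives $\{2\}$, $\{3\}$, $\{2, 4\}$, $\{3, 5\}$ inside $L(x^2)$, $L(x^3)$, $L(x^4)$, $L(x^5)$ respectively.

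It then remains to exclude every integer in $\{1, \ldots, n\}$ that is not claimed, and this is where I would be most careful, since the argument only succeeds if the exclusions exactly complement the inclusions. The value $1$ is ruled out for each $n \ge 2$ because $x^n = x \cdot x^{n-1}$ is a product of two nonunits, hence reducible. Lemma~\ref{lemma:neg} removes $n-1$ for every $n$, and removes $2$ whenever $n$ is odd. Tallying: for $n = 2$ we delete $1$ (reducible) and are left with $\{2\}$; for $n = 3$ we delete $1$ and $2$ (with $2$ both odd-excluded and equal to $n-1$), leaving $\{3\}$; for $n = 4$ we delete $1$ and $3 = n-1$, leaving $\{2, 4\}$; and for $n = 5$ we delete $1$, the odd-prohibited $2$, and $4 = n-1$, leaving exactly $\{3, 5\}$. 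In each case the surviving set coincides with what Lemma~\ref{lemma:6} already placed inside $L(x^n)$, so the two bounds meet and the proposition follows. There is no genuine obstacle here beyond this bookkeeping; the real content was discharged in Lemmas~\ref{lemma:5}, \ref{lemma:neg}, and \ref{lemma:6}.
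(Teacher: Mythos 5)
Your proposal is correct and is essentially the paper's own argument: the paper proves this proposition by citing exactly Lemmas~\ref{lemma:5}, \ref{lemma:neg}, and \ref{lemma:6}, and your write-up simply makes the bookkeeping (upper bound, parity inclusions with $q = x$, and the exclusions of $1$, $n-1$, and $2$ for odd $n$) explicit.
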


\begin{proof}
  This follows directly from Lemmas \ref{lemma:5}, \ref{lemma:neg}, and
  \ref{lemma:6}.
\end{proof}

We now proceed to find the set of lengths of $x^6$.  By Lemmas \ref{lemma:5},
\ref{lemma:neg}, and \ref{lemma:6} we have
$$\{2,4,6\}\subseteq L(x^6) \subseteq \{2, 3, 4, 6\}.$$
It remains to determine if $3\in L(x^6)$; this depends on whether $|R|>4$ or
$|R|=4$ as we will see in Lemma \ref{lemma:card4} below.

We first establish some general results about local rings of
cardinality $4$.

\begin{prop} \label{prop:char2} Let $(R, \m)$ be any local ring.  The following
  are equivalent.
  \begin{enumerate}[font=\normalfont]
  \item \label{a}  $\chara(R/\m) = 2$ 
  \item \label{b}  $2\in \m$
  \end{enumerate}
  If $\m\ne 0$ but $\m^2=0$, then {\normalfont (\ref{a})} and {\normalfont
    (\ref{b})} are equivalent to
  \begin{enumerate}[resume, font=\normalfont]
  \item \label{c}  $2\m = 0$
  \end{enumerate}
\end{prop}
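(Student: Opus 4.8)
The plan is to establish the unconditional equivalence (a) $\Leftrightarrow$ (b) first, and then to prove (b) $\Leftrightarrow$ (c) under the extra hypotheses $\m \neq 0$ and $\m^2 = 0$. For the first equivalence, the key observation is that $\chara(R/\m) = 2$ says precisely that the image of $2$ in the field $R/\m$ is zero, which is exactly the condition $2 \in \m$. To make this fully rigorous I would note that the image of $2$ in $R/\m$ vanishes if and only if $2 \in \m$; since $R/\m$ is a field, its characteristic is either $0$ or a prime, and the image of $2$ can vanish only when that prime is $2$ (it is nonzero in characteristic $0$). This argument uses nothing about $\m$ beyond its being the maximal ideal, so it holds for an arbitrary local ring.

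For the implication (b) $\Rightarrow$ (c), I would assume $2 \in \m$ and observe that for every $m \in \m$ the product $2m$ lies in $\m \cdot \m = \m^2 = 0$. Hence $2\m = 0$ follows at once. This direction invokes $\m^2 = 0$ but does not need $\m \neq 0$.

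The direction I expect to be the only real obstacle is (c) $\Rightarrow$ (b), and it is precisely here that the hypothesis $\m \neq 0$ becomes essential. I would argue by contradiction: suppose $2\m = 0$ but $2 \notin \m$. Because $R$ is local, every element lying outside $\m$ is a unit, so $2$ is then a unit of $R$. Multiplying the relation $2\m = 0$ by $2^{-1}$ forces $\m = 0$, contradicting the standing assumption $\m \neq 0$. Therefore $2 \in \m$, completing the cycle of equivalences. The subtle point to flag is that without $\m \neq 0$ the condition (c) is vacuously satisfied by the zero ideal and can no longer detect whether $2 \in \m$, which is exactly why that hypothesis is attached only to the second block of equivalent conditions.
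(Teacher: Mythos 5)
Your proof is correct and follows essentially the same route as the paper: identify $\chara(R/\m)=2$ with the vanishing of $\bar{2}$ in $R/\m$, get (b) $\Rightarrow$ (c) from $2\m \sse \m^2 = 0$, and get (c) $\Rightarrow$ (b) by noting that $2\m = 0$ with $\m \neq 0$ forces $2$ to be a non-unit, hence an element of $\m$. Your write-up is slightly more explicit on two points the paper compresses (that a field's characteristic is $0$ or prime, and the unit-cancellation step), but the substance is identical.
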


\begin{proof}
  We have $\chara(R/\m) = 2$ if and only if $\bar{1} + \bar{1} = \bar{2} =
  \bar{0}$ in $R/\m$ if and only if $2\in\m$.  Now assume $\m\ne 0$ but
  $\m^2=0$.  For (\ref{b}) implies (\ref{c}), given any $m\in \m$, we have
  $2m\in\m^2 = 0$.  Conversely, if $2\m = 0$ then $2$ is not a unit (since $\m
  \neq 0$), so $2\in \m$.
\end{proof}

\begin{prop} \label{prop:m2} Let $(R,\m)$ be any local ring.  If $|\m|=2$, then
  $|R|=4$.
\end{prop}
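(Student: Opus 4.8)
The plan is to reduce the statement to a computation of the size of the residue field, then pin that size down using the structure of $\m$. First I would observe that $R$ is the disjoint union of the cosets of $\m$, each of cardinality $|\m| = 2$, and there are exactly $|R/\m|$ such cosets. Hence $|R| = |\m|\cdot|R/\m| = 2\,|R/\m|$, so it suffices to prove that the residue field $\ol{R} = R/\m$ has exactly two elements.

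Next I would analyze the single nonzero element of $\m$. Write $\m = \{0, m\}$ with $m \neq 0$. The key preliminary step is to show $\m^2 = 0$. Since $m^2 \in \m$, we have either $m^2 = 0$ or $m^2 = m$. In the second case $m$ is idempotent, so $m(1 - m) = 0$; but $m \in \m$ forces $1 - m \notin \m$, and since $R$ is local this means $1 - m$ is a unit, giving $m = 0$, a contradiction. Therefore $m^2 = 0$, and since $\m$ is generated (as a set) by $\{0,m\}$ this yields $\m^2 = 0$.

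With $\m^2 = 0$ in hand, the maximal ideal $\m$ is an $R$-module annihilated by $\m$, hence a vector space over the field $k = R/\m$. Because $\m \neq 0$ is finite, $k$ must itself be finite, and $|\m| = |k|^{\dim_k \m}$ with $\dim_k \m \geq 1$. As $|\m| = 2$ is prime, the only possibility is $|k| = 2$ and $\dim_k \m = 1$. Thus $|R/\m| = 2$, and combining with the first paragraph gives $|R| = 2 \cdot 2 = 4$.

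I do not expect a serious obstacle here, as the argument is elementary. The only two points requiring care are the exclusion of the case $m^2 = m$, where locality of $R$ is precisely what makes $1 - m$ a unit, and the transition to vector-space language, which is legitimate exactly because $\m^2 = 0$ makes the scalar action of $k = R/\m$ on $\m$ well defined.
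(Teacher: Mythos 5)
Your proof is correct, but it takes a genuinely different route from the paper's. The paper decomposes $R$ as the disjoint union $R^\times \cup \m$ of its units and its maximal ideal, and then counts the units directly: writing $\m = \{0,t\}$, it shows any unit $u \neq 1$ must satisfy $t(u-1)=0$, forcing $u - 1 \in \m$ and hence $u = t+1$, so that $R = \{0,1,t,t+1\}$ explicitly. You instead decompose $R$ additively into cosets of $\m$, reducing the problem to showing $|R/\m| = 2$; you then prove $\m^2 = 0$ by ruling out the idempotent case $m^2 = m$ (correctly using locality to make $1-m$ a unit), and view $\m$ as a vector space over $k = R/\m$, so that $|\m| = 2$ being prime pins down $|k| = 2$. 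Both arguments are complete and elementary. The paper's approach has the virtue of exhibiting the ring's four elements concretely, which dovetails with the classification of four-element local rings invoked in Remark~\ref{remark:4_elements}; your approach is more structural and generalizes verbatim: the same cosets-plus-vector-space argument shows that $|\m| = p$ prime forces $|R| = p^2$, a statement the paper's unit-counting argument does not immediately yield. One small point of rigor worth making explicit in your write-up: the equality $|\m| = |k|^{\dim_k \m}$ presupposes that $k$ is finite and the dimension is finite, which you should justify by noting that any nonzero vector $v \in \m$ gives an injection $k \to \m$, $a \mapsto av$, so $|k| \le |\m| = 2$.
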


\begin{proof} Since $R = R^\times \cup \m$, the disjoint union of the units
  $R^\times$ and the maximal ideal $\m$, it suffices to show that $R$ has
  exactly two units.  Suppose $\m = \{0, t\}$. If the only unit of $R$ is $1$,
  then $R$ is a ring with three elements and is thus isomorphic to $\Z_3$,
  contradicting $|\m| = 2$.  Therefore there exists a unit $u\ne 1$ in $R$; we
  show $u = t + 1$.  Since $ut\in \m$, either $ut=0$ or $ut=t$.  The first case
  is impossible since $t \neq 0$.  In the second case $t(u-1) = 0$ and hence
  $u-1\in\m$.  This implies $u-1=t$ so $u=t+1$.  This shows that $R=\{0, 1, t,
  t+1\}$, a ring with four elements.
\end{proof}

\begin{remark} \label{remark:4_elements} It is known (\cite[Exercise I.4,
  p.4]{M}) that if $R$ is any ring with four elements, then $R$ must be
  isomorphic to one of the following: $\Z_4$, $\F_4$, $\Z_2\times\Z_2$, or
  $\F_2[t]/(t^2)$.  Of these, the only ones that are local rings and are not
  fields are $\Z_4$ and $\F_2[t]/(t^2)$.  Note that both of these have the
  equivalent properties (1), (2), (3) of Proposition \ref{prop:char2}.  Also
  note that if $R=\Z_4$ or $R=\F_2[t]/(t^2)$ there are exactly two irreducible
  GE~polynomials of degree $2$ in $R[x]$. (See Remark \ref{remark:counting}.)
  We will need this fact in the next proof.
\end{remark}

In the next lemma, we find the set of lengths of $x^{6}$; it will also be used
as the base for an induction in the proposition to follow.

\begin{lemma} \label{lemma:card4} Suppose $\f{m}^{2} = 0$.  If $|R| > 4$ then
  $L(x^6) = \{2,3,4,6\}$; if $|R| = 4$ then $L(x^6) = \{2,4,6\}$.
\end{lemma}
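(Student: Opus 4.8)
The plan is to leverage the sandwich $\{2,4,6\}\subseteq L(x^6)\subseteq\{2,3,4,6\}$ established just before the lemma, so that the whole content is deciding whether $3\in L(x^6)$ and showing this is exactly what separates $|R|>4$ from $|R|=4$. First I would analyze what a length-$3$ factorization of $x^6$ must look like. By Corollary~\ref{cor:monic} I may take the three irreducible factors to be monic and nonconstant, and by Lemma~\ref{lemma:factor_of_GE_is_GE} each is then a GE~polynomial; their degrees $d_1\ge d_2\ge d_3\ge 1$ sum to $6$. Applying Proposition~\ref{Megaprop} to the product gives $x^6=hx^{d_2+d_3}$ with $h$ a GE~polynomial of degree $d_1$, forcing $h=x^{d_1}$. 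If $d_1>d_2$, the irreducibility clause of Proposition~\ref{Megaprop} would make $h=x^{d_1}$ irreducible, contradicting Corollary~\ref{cor:2} (since $d_1\ge 2$ and $x^{d_1}$ has zero constant term). Hence $d_1=d_2$, and the only partition of $6$ into three parts with $d_1=d_2\ge d_3\ge1$ is $2+2+2$. So $3\in L(x^6)$ if and only if $x^6$ is a product of three irreducible quadratic GE~polynomials.

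Next I would carry out the defining computation. Writing each quadratic factor as $x^2+\widetilde p_i$ with $\widetilde p_i=b_ix+c_i\in\m[x]$, the relation $\m^2=0$ kills every cross term, so
\[
(x^2+\widetilde p_1)(x^2+\widetilde p_2)(x^2+\widetilde p_3)=x^6+\bigl(\widetilde p_1+\widetilde p_2+\widetilde p_3\bigr)x^4 .
\]
Thus the product equals $x^6$ exactly when $\sum_i b_i=0$ and $\sum_i c_i=0$, while Corollary~\ref{cor:2} says each factor is irreducible precisely when $c_i\ne0$. Taking all $b_i=0$, the problem reduces to the purely additive question: do there exist $c_1,c_2,c_3\in\m\setminus\{0\}$ with $c_1+c_2+c_3=0$?

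Finally I would settle this additive question by cases, which is where the hypothesis on $|R|$ enters. If $|R|>4$ then $|\m|\ne 2$ by Proposition~\ref{prop:m2}, so $\m$ contains at least two distinct nonzero elements; a short argument then produces nonzero $c_1,c_2$ with $c_1+c_2\ne0$, after which $c_3=-(c_1+c_2)$ completes an admissible triple and yields $3\in L(x^6)$, hence $L(x^6)=\{2,3,4,6\}$. If $|R|=4$ then by Remark~\ref{remark:4_elements} we have $\m=\{0,t\}$ with $2t=0$, so the only possible nonzero constant term is $c_i=t$ and any triple gives $c_1+c_2+c_3=3t=t\ne0$; equivalently there are only the two irreducible quadratic GE~polynomials of Remark~\ref{remark:counting}, and no three of them multiply to $x^6$, so $3\notin L(x^6)$ and $L(x^6)=\{2,4,6\}$. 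The main obstacle is the first step, pinning the degree partition down to $2+2+2$, since it is the place where Proposition~\ref{Megaprop}'s irreducibility clause must be invoked correctly; once the factorization shape is known, the remaining computation and case analysis are routine.
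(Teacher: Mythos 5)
Your proof is correct, and it shares the paper's scaffolding: the sandwich $\{2,4,6\}\subseteq L(x^6)\subseteq\{2,3,4,6\}$, Proposition~\ref{Megaprop} to control the degrees in a hypothetical length-three factorization, explicit quadratic GE factorizations for $|R|>4$, and Proposition~\ref{prop:m2} together with Remark~\ref{remark:4_elements} for the cardinality dichotomy. But the execution differs in two genuine ways. First, to rule out the degree patterns $(4,1,1)$ and $(3,2,1)$ you invoke the irreducibility clause of Proposition~\ref{Megaprop}: when $d_1>d_2$ the polynomial $h$ would be irreducible, yet $hx^{d_2+d_3}=x^6$ forces $h=x^{d_1}$ (cancellation is harmless, since multiplying by $x^{d_2+d_3}$ only shifts coefficients, but this step does need saying), contradicting Corollary~\ref{cor:2}; the paper instead uses the order clause, $6=\ord(x^6)=d_2+d_3<6$. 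Second, and more substantively, you fold both cardinality cases into a single criterion: $3\in L(x^6)$ if and only if there exist nonzero $c_1,c_2,c_3\in\m$ with $c_1+c_2+c_3=0$, via the expansion $(x^2+\widetilde p_1)(x^2+\widetilde p_2)(x^2+\widetilde p_3)=x^6+(\widetilde p_1+\widetilde p_2+\widetilde p_3)x^4$. For $|R|=4$ this settles the matter by arithmetic alone ($c_i=t$ is forced and $3t=t\ne0$), whereas the paper argues by counting: Remark~\ref{remark:counting} gives only two irreducible quadratic GE polynomials, pigeonhole makes two of the three factors equal, their product is $x^4$, and then the third factor would be the reducible $x^2$. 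Your route buys uniformity and makes transparent exactly what separates the two cases, namely whether $\m\setminus\{0\}$ admits a zero-sum triple (equivalently whether $|\m|>2$); the paper's counting argument is marginally shorter given the Remark. For $|R|>4$ your construction is the paper's in different clothing --- your dichotomy ``either $2a\ne0$, or $2a=0$ and then $a+b\ne0$ for distinct nonzero $a,b$'' is its case split on $\chara(R/\m)$ --- and the ``short argument'' you leave implicit is indeed one line, though in a final write-up you should spell it out.
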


\begin{proof}
  By the remarks after Proposition \ref{prop:lengths<6}, it is enough to show
  that (a) if $|R|>4$ then $3\in L(x^6)$, and (b) if $|R|=4$ then
  $3\not\in L(x^6)$.

  Proof of (a):  We first show there exist three nonzero elements $a, b, c$ in
  $\m$ satisfying $a+b+c=0$.

  Suppose $\chara(R/\m) = 2$.  By Proposition \ref{prop:m2}, we know there are
  two distinct nonzero elements $a$ and $b$ in $\m$.  We must have $a+b\ne 0$,
  since otherwise $a=-b = b$ by Proposition \ref{prop:char2} ($3$), a
  contradiction.  With $c=-(a+b)$ we have $a+b+c=0$ for three nonzero elements
  $a, b, c$.

  If, on the other hand, $\chara(R/\m) \ne 2$, then by Proposition
  \ref{prop:char2}, there exists a nonzero element $a\in\m$ with $2a\ne 0$.  Now
  set $b=a$ and $c=-2a$.  Then $a+b+c=0$ and all three elements are nonzero.

  Now by Corollary \ref{cor:2}, each of the polynomials $x^2+a$, $x^2+b$, and
  $x^2+c$ is an irreducible GE polynomial.  Since $a+b+c=0$ and $\m^2=0$, the
  factorization $(x^2+a)(x^2+b)(x^2+c) = x^6$ shows that $3\in L(x^6)$.

  Proof of (b): Suppose $|R|=4$ and $3\in L(x^6)$.  Then there exists three
  irreducible, monic, nonconstant GE~polynomials $f_1, f_2$, $f_3$ whose product
  is $x^6$.  Without loss of generality we have the following three cases for
  $(\deg(f_{1}), \deg(f_{2}), \deg(f_{3}))$:  $(4, 1, 1)$, $(3, 2, 1)$, and
  $(2, 2, 2)$.  For the first two cases, $\deg(f_1)$ is greater than $\deg(f_2)$
  and $\deg(f_3)$, so by Proposition \ref{Megaprop},
  $6 = \ord(x^6) = \ord(f_1 f_2 f_3) = \deg(f_2)+\deg(f_3) < 6$, which is a
  contradiction.

  For the last case, since, as noted in Remark~\ref{remark:counting}, there are
  exactly two irreducible GE~polynomials of degree $2$ in $R[x]$, at least two
  $f_i$ are the same, say $f_1 = f_2$, and thus, since $\f{m}^{2} = 0$,
  $f_1f_2=x^4$.  But since $f_1 f_2 f_3 = x^6$, we have $f_3=x^2$, a
  contradiction since $f_3$ is irreducible.  So $3\not\in L(x^6)$.
\end{proof}  

\begin{prop}\label{prop:n-3}
  Suppose $\f{m}^{2} = 0$.  For all $n \geq 6$, $|R|>4$ if and only if
  $n-3 \in L(x^n)$.
\end{prop}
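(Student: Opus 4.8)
The plan is to prove the two implications separately, after recording that a local ring $(R,\m)$ with $\m\neq 0$ and $\m^2=0$ always has $|R|\geq 4$ (if $R$ is finite then $|R|=|R/\m|\cdot|\m|\geq 2\cdot 2$, and if $R$ is infinite then $|R|>4$ trivially). Hence the negation of $|R|>4$ is exactly $|R|=4$, so the two directions are ``$|R|>4\Rightarrow n-3\in L(x^n)$'' and ``$|R|=4\Rightarrow n-3\notin L(x^n)$''.

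For the forward implication I would exhibit an explicit factorization. When $|R|>4$, the computation in the proof of Lemma~\ref{lemma:card4}(a) supplies nonzero $a,b,c\in\m$ with $a+b+c=0$, so that $x^6=(x^2+a)(x^2+b)(x^2+c)$ with three irreducible quadratic factors by Corollary~\ref{cor:2}. Appending $n-6$ copies of the irreducible linear polynomial $x$ then gives
\[
  x^n=(x^2+a)(x^2+b)(x^2+c)\,x^{\,n-6},
\]
a product of $3+(n-6)=n-3$ irreducibles, so $n-3\in L(x^n)$.

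For the reverse implication I would argue the contrapositive. Assume $|R|=4$ and, for contradiction, that $x^n=f_1\cdots f_{n-3}$ with each $f_i$ irreducible; by Corollary~\ref{cor:monic} and Lemma~\ref{lemma:factor_of_GE_is_GE} the $f_i$ may be taken monic, nonconstant, and GE. Since $\sum_i\deg f_i=n$ while there are $n-3$ factors, the degrees exceed $1$ by a total of $3$, so the degree multiset is $(4,1,\dots,1)$, $(3,2,1,\dots,1)$, or $(2,2,2,1,\dots,1)$. In the first two profiles there is a unique factor of top degree strictly exceeding all others, so Proposition~\ref{Megaprop} forces $\ord(x^n)=\sum_{i\geq 2}\deg f_i$, that is $n=n-4$ or $n=n-3$, each absurd.

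The profile $(2,2,2,1,\dots,1)$ is the main obstacle and is where the hypothesis $|R|=4$ does its work. By Remark~\ref{remark:counting} (see also Remark~\ref{remark:4_elements}) there are only two irreducible GE polynomials of degree two, so by pigeonhole two of the three quadratic factors coincide, say $f_1=f_2=x^2+bx+t$ with $t$ the nonzero element of $\m$ and $b\in\m$. A short expansion using $\m^2=0$ together with $2\m=0$ (Proposition~\ref{prop:char2}, available since $|R|=4$ forces characteristic two) collapses every middle coefficient and yields $f_1f_2=x^4$. Cancelling the regular element $x^4$ from $x^n=x^4\,f_3\,\ell_1\cdots\ell_{n-6}$, where $f_3$ is the remaining quadratic and the $\ell_j$ are the $n-6$ linear factors, gives
\[
  x^{\,n-4}=f_3\,\ell_1\cdots\ell_{n-6},
\]
exhibiting $x^{n-4}$ as a product of $1+(n-6)=n-5=(n-4)-1$ irreducibles. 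Since $n\geq 6$ gives $n-4\geq 2$, this contradicts Lemma~\ref{lemma:neg}. The argument is uniform in $n\geq 6$ and in particular recovers the base case $n=6$ of Lemma~\ref{lemma:card4}; one could equally phrase it as an induction on $n$ with that lemma as the base, peeling off a factor of $x$ when one is present and invoking the identity $f_1f_2=x^4$ otherwise. In every version the crux is precisely this identity, which is exactly where having only two quadratic irreducibles and characteristic two is used.
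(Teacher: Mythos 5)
Your proof is correct, and while your forward direction is the paper's argument (unwind Lemma~\ref{lemma:card4}(a) to factor $x^6$ into three irreducible quadratics, then append $x^{n-6}$), your backward direction takes a genuinely different route. The paper proves $n-3\notin L(x^n)$ for $|R|=4$ by induction on $n$: the base case $n=6$ is Lemma~\ref{lemma:card4}(b), and the inductive step pairs a linear factor with a non-linear one via Proposition~\ref{Megaprop} to produce a length-$k$ factorization of $x^{k+3}$, contradicting the inductive hypothesis. You instead argue directly for every $n\geq 6$ at once: since $n-3$ monic nonconstant factors have degrees summing to $n$, the degree profile must be $(4,1,\ldots,1)$, $(3,2,1,\ldots,1)$, or $(2,2,2,1,\ldots,1)$; the first two die by the order formula in Proposition~\ref{Megaprop}, and the last dies because Remark~\ref{remark:counting} gives only two irreducible GE quadratics, forcing $f_1=f_2$ and hence $f_1f_2=x^4$, after which cancelling the regular element $x^4$ exhibits $x^{n-4}$ as a product of $(n-4)-1$ irreducibles, contradicting Lemma~\ref{lemma:neg}. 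In effect you have generalized the paper's $n=6$ case analysis to all $n$, so your argument subsumes Lemma~\ref{lemma:card4}(b) rather than invoking it as a black box; the paper's induction is shorter given that lemma, while yours is non-inductive, uniform in $n$, and could replace both the lemma and the induction. You are also more careful on one point the paper elides: the identity $f_1f_2=x^4$ needs $2\m=0$ in addition to $\m^2=0$, which you correctly extract from $|R|=4$ via Proposition~\ref{prop:char2}, whereas the paper's wording attributes it to $\m^2=0$ alone.
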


\begin{proof}
  If $|R| >4$ then by Lemma~\ref{lemma:card4}, $3\in L(x^6)$, so there is
  a factorization of $x^6$ into three irreducible polynomials. Multiplying this
  factorization by $x^{n-6}$ gives a factorization of $x^n$ of length
  $n-3$. Therefore $n-3\in L(x^n)$.

  Now assume $|R| = 4$.  We show $n-3\not\in L(x^n)$ for $n\ge 6$ (equivalently,
  $n \not\in L(x^{n + 3})$ for $n \ge 3$) by induction on $n$.  By Lemma
  \ref{lemma:card4}, $3\not\in L(x^6)$.  Now assume $k\not\in L(x^{k+3})$ for
  some $k\ge 3$.  We show $k+1\not\in L(x^{k+4})$.

  Suppose by way of contradiction that $k+1\in L(x^{k+4})$; then there exist
  $k+1$ irreducible, monic, nonconstant GE~polynomials
  $f_1, f_2, \ldots, f_{k+1}$, whose product is $x^{k+4}$.  At least one $f_i$
  must be linear, since otherwise
  $k + 4 = \deg(x^{k + 4}) = \sum_{i = 1}^{k + 1} \deg(f_{i}) \geq 2(k + 1)$,
  which is impossible since $k \geq 3$. Furthermore, at least one $f_i$ must be
  non-linear.  Without loss of generality, let $f_1$ be linear and $f_{k+1}$ be
  non-linear.  Then by Proposition \ref{Megaprop} there exists an irreducible
  GE~polynomial $h$ such that $f_{k+1}f_1 = hx$. Therefore
  $f_{1} \cdots f_{k} = (f_{k+1} f_1)f_{2} \cdots f_{k} = (hx)f_{2} \cdots
  f_{k}$.  We now have $hf_{2} \cdots f_{k} = x^{k+3}$ which implies
  $k\in L(x^{k+3})$.  This contradicts our assumption.  Therefore
  $n\not\in L(x^{n+3})$ for $n\ge 3$, or equivalently, $n-3\not\in L(x^n)$.
\end{proof}

The next two Lemmas do not depend on the cardinality of the local ring $R$.

\begin{lemma}\label{lemma:7}
  Suppose $\f{m}^{2} = 0$.  For all $n\ge 7$, $3\in L(x^n)$
\end{lemma}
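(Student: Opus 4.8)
The plan is to split on the parity of $n$, since the two cases call for different tools and the even case is where all the difficulty lies. For odd $n\ge 7$ the statement is essentially free: $x$ is itself an irreducible GE~polynomial of degree $1$, so applying Lemma~\ref{lemma:6}(2) with $q=x$ yields $\{3,5,\dots,n\}\subseteq L(x^n)$, and in particular $3\in L(x^n)$. (In fact this gives $3\in L(x^n)$ for every odd $n\ge 3$.) So I would dispose of the odd case in one line and concentrate on even $n$.

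For even $n$ the right degree pattern is forced by Proposition~\ref{Megaprop}. If $x^n=f_1f_2f_3$ with the $f_i$ irreducible GE~polynomials of degrees $d_1\ge d_2\ge d_3$, then $d_1>d_2$ is impossible: Proposition~\ref{Megaprop} would give $\ord(x^n)=d_2+d_3<d_1+d_2+d_3=n$, a contradiction. Thus the top two degrees must coincide, and any length-$3$ factorization has shape $(d,d,\,n-2d)$. For even $n\ge 8$ I would take the most convenient such shape, $d=\tfrac n2-1$ and $n-2d=2$; the inequality $n\ge 8$ guarantees $d\ge 3>2$, which is exactly the slack I will need.

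With the pattern chosen, the proof becomes an explicit construction. Picking any nonzero $m,c\in\m$, set
\[
f_1=x^{d}+c,\qquad f_2=x^{d}-m\,x^{d-2}-c,\qquad f_3=x^{2}+m,\qquad d=\tfrac n2-1 .
\]
Each $f_i$ is a GE~polynomial of degree $\ge 2$ with nonzero constant term ($c$, $-c$, and $m$ respectively), hence irreducible by Corollary~\ref{cor:2}. Multiplying out and using $\m^2=0$ to annihilate every product of two elements of $\m$, the terms $\pm c\,x^{d}$ cancel and $c^2,cm,m^2$ all vanish, so $f_1f_2=x^{2d}-m\,x^{2d-2}$ and then $f_1f_2f_3=x^{2d+2}-m^2x^{2d-2}=x^{2d+2}=x^{n}$. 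This produces a length-$3$ factorization of $x^n$ for every even $n\ge 8$, with no hypothesis on $R$ beyond $\m\ne 0$ and $\m^2=0$.

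The step I expect to matter most is the choice of degree pattern rather than the multiplication, which is routine once $\m^2=0$ is invoked. The real content is that the two degree-$d$ factors carry opposite constant terms $c$ and $-c$, so their constant terms cancel for \emph{any} ring, even when $\m=\{0,t\}$. This is precisely what fails at $n=6$: there the forced pattern is the balanced $(2,2,2)$, all three constant terms must be nonzero, and when $|R|=4$ they cannot sum to $0$ (so indeed $3\notin L(x^6)$, as recorded in Lemma~\ref{lemma:card4}). Having $n\ge 8$, i.e.\ $d\ge 3>n-2d$, is what lets me split the required cancellation into a single $\pm c$ pair and thereby beat the $n=6$ obstruction uniformly in $|R|$.
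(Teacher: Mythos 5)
Your proof is correct, and it reaches the conclusion by a genuinely different decomposition than the paper's. You split on the parity of $n$: the odd case is dispatched by citing Lemma~\ref{lemma:6} with $q=x$ (which indeed gives $3\in L(x^n)$ for every odd $n\ge 3$), and for even $n\ge 8$ you verify the single identity
\[
(x^{d}+c)\,(x^{d}-m\,x^{d-2}-c)\,(x^{2}+m)=x^{2d+2},\qquad d=\tfrac{n}{2}-1\ge 3,
\]
whose three factors are GE~polynomials of degree at least two with nonzero constant terms, hence irreducible by Corollary~\ref{cor:2}; the expansion checks out, since $\m^2=0$ kills $c^2$, $cm$, $m^2$ and the $\pm c\,x^{d}$ terms cancel. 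The paper instead splits on $n \bmod 3$ and verifies three factorization families $(x^\ell+m)(x^{\ell+j}-m)(x^{\ell+j}-mx^{j}+m)=x^{3\ell+2j}$ for $j=1,2,3$, which handles both parities uniformly without invoking Lemma~\ref{lemma:6}. Your route is more economical --- only one identity to expand, with half the cases delegated to existing machinery --- and your preliminary use of Proposition~\ref{Megaprop} to show that any length-three factorization of $x^n$ into GE~polynomials must have its two largest degrees equal is a genuine insight the paper's proof leaves implicit: it explains both why your chosen shape $(d,d,2)$ is natural and why $n=6$ is the exceptional case (there the only admissible shape is $(2,2,2)$, where all three constant terms must be nonzero and sum to zero, which fails exactly when $|R|=4$, consistent with Lemma~\ref{lemma:card4}). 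What the paper's version buys in exchange is a single self-contained template applied uniformly to all residues modulo~3.
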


\begin{proof}
  Let $n\ge 7$.  Let $m$ be a nonzero element of the maximal ideal $\m$, let
  $\ell$ be a positive integer, and consider the following three factorizations.
\begin{equation}\label{eqn:a}
(x^\ell + m)(x^{\ell + 1} - m)(x^{\ell + 1} - mx + m) = x^{3\ell + 2}
\end{equation}
\begin{equation}\label{eqn:b}
(x^\ell + m)(x^{\ell + 2} - m)(x^{\ell + 2} - mx^2 + m) = x^{3\ell + 4}
\end{equation}
\begin{equation}\label{eqn:c}
(x^\ell + m)(x^{\ell + 3} - m)(x^{\ell + 3} - mx^3 + m) = x^{3\ell + 6}
\end{equation}
By Corollary \ref{cor:2}, each polynomial on the left side of the three
factorizations is irreducible.  Assume $n\ge 7$.  There are three cases:
\begin{description}
\item[$n \equiv 0 \pmod{3}$] Set $\ell = \frac{n-6}{3}$; then from equation
  (\ref{eqn:c}), $3\in L(x^n)$.
\item[$n \equiv 1 \pmod{3}$] Set $\ell = \frac{n-4}{3}$; then from equation
  (\ref{eqn:b}), $3\in L(x^n)$.
\item[$n \equiv 2 \pmod{3}$] Set $\ell = \frac{n-2}{3}$; then from equation
  (\ref{eqn:a}), $3\in L(x^n)$.
\end{description}
Therefore for any integer $n\ge 7$, we have $3\in L(x^n)$.
\end{proof}
 
\begin{lemma}\label{lemma:new}
  Suppose $\f{m}^{2} = 0$.  For any integer $n\ge 7$:  \be[font=\normalfont]
\item $\{3,4, 5,\ldots, n-4\}\cup\{n-2, n\}\subseteq L(x^n)$ if $n$ is odd.
\item $\{2,3,4,5,\ldots, n-4\}\cup\{n-2, n\}\subseteq L(x^n)$ if $n$ is even.
  \ee
\end{lemma}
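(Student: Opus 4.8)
The plan is to realize every length in the stated sets by combining two mechanisms: the same‑parity factorizations already produced in Lemma~\ref{lemma:6}, and a simple ``push‑up'' that converts a short factorization of a lower power of $x$ into a longer factorization of $x^n$. The starting observation is that $x$ itself is an irreducible GE~polynomial: it is monic of degree one, hence regular, and any factorization into non‑units would require two factors of positive degree summing to $1$, which is impossible. Consequently Lemma~\ref{lemma:6} applies with $q = x$, giving immediately that $\{n-2,n\}\subseteq L(x^n)$ in both parities, and that $2\in L(x^n)$ when $n$ is even. What remains is to fill in the full interval $\{3,4,\ldots,n-4\}$ of lengths, and here the real work (filling the opposite‑parity lengths) takes place.

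The key tool I would isolate first is the push‑up principle: if $x^m = g_1\cdots g_k$ is a factorization into irreducibles, then $x^{m+t} = g_1\cdots g_k\cdot x^t$ is a factorization into $k+t$ irreducibles, because $x$ is irreducible. Thus $k\in L(x^m)$ implies $k+t\in L(x^{m+t})$ for every $t\ge 0$. Combining this with Lemma~\ref{lemma:7}, which supplies $3\in L(x^m)$ for \emph{all} $m\ge 7$, lets me hit an arbitrary target length $k$ in the desired range. Given $k$ with $3\le k\le n-4$, I set $m = n-k+3$ and $t = k-3$; then $7\le m\le n$ and $t\ge 0$, so Lemma~\ref{lemma:7} gives a length‑$3$ factorization of $x^m$, and pushing it up by $x^t$ yields a factorization of $x^{m+t}=x^n$ of length $3+t = k$. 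This establishes $\{3,4,\ldots,n-4\}\subseteq L(x^n)$ for every $n\ge 7$, uniformly in the parity of $n$.

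Finally I would assemble the two pieces. For odd $n$, the interval $\{3,\ldots,n-4\}$ from the push‑up argument together with $\{n-2,n\}$ from Lemma~\ref{lemma:6} gives exactly the set in part~(1); for even $n$ the same two ingredients plus the single extra length $2$ from Lemma~\ref{lemma:6} give the set in part~(2). The only point demanding care, and the place where the argument could silently go wrong, is the index bookkeeping in the push‑up step: I must verify that $m=n-k+3$ really lands in $[7,n]$ so that Lemma~\ref{lemma:7} is applicable and $t=k-3\ge 0$, and I should check the degenerate boundary cases $n=7$ and $n=8$, where the interval $\{3,\ldots,n-4\}$ collapses to $\{3\}$ and $\{3,4\}$ respectively, to confirm the claimed sets are produced there as well. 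This range‑checking, rather than any deep idea, is the main obstacle.
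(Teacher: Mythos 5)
Your proof is correct and follows essentially the same route as the paper's: both rest on Lemma~\ref{lemma:6} (applied with $q=x$) and Lemma~\ref{lemma:7}, together with the same push-up device of multiplying a length-$3$ factorization of $x^{n-k+3}$ (valid since $n-k+3\ge 7$) by $x^{k-3}$ to realize each length $k$. The only cosmetic difference is that you apply the push-up uniformly to every $k\in\{3,\ldots,n-4\}$, while the paper gets the same-parity lengths from Lemma~\ref{lemma:6} and reserves the push-up for the opposite-parity ones.
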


\begin{proof}
  Suppose $n\ge 7$ and $n$ is odd.  If $n=7$, then $\{3, 5, 7\}\subseteq L(x^7)$
  by Lemma~\ref{lemma:6}. Thus we may assume $n\ge 9$.  Again by
  Lemma~\ref{lemma:6}, $\{3, 5, 7,\cdots, n-2, n\}\subseteq L(x^n)$, so it
  remains to show that if $k$ is even and $4\le k\le n-4$, then $k\in L(x^n)$.
  If $4\le k \le n-4$, then $n-k+3\ge 7$, so by Lemma~\ref{lemma:7},
  $3\in L(x^{n-k+3})$; that is, there exists a factorization of $x^{n-k+3}$ of
  length $3$.  Multiplying both sides of this factorization by $x^{k-3}$ we have
  $k\in L(x^n)$. This proves (1).

  Now suppose $n$ is even and $n\ge 8$.  By Lemma~\ref{lemma:6},
  $\{2, 4, 6, \ldots, n-2, n\}\subseteq L(x^n)$.  It remains to show that if $k$
  is odd and $3\le k\le n-4$, then $k\in L(x^n)$.  If $3\le k \le n-4$ then
  $n-k+3\ge 7$.  By Lemma~\ref{lemma:7}, $3\in L(x^{n-k+3})$. That is, there is
  a factorization of $x^{n-k+3}$ into three irreducible polynomials.
  Multiplying both sides of this factorization by $x^{k-3}$ show $k\in L(x^n)$.
\end{proof}

The following is the main result of this paper, along with
Proposition~\ref{prop:lengths<6} and Lemma~\ref{lemma:card4}.  Note that the
only difference the cardinality of $R$ makes is in whether or not
$n - 3 \in L(x^{n})$.

\begin{theorem}\label{thm:length of x^n}
  Suppose $\f{m}^{2} = 0$.  Let $n$ be an integer with $n \geq 7$.\\
  If $|R|>4$ then
  \begin{align*}
    L(x^n) &=\{3, 4, 5, \ldots, n-2\}\cup \{n\} \text{ if $n$ is odd, and}\\
    L(x^n) &=\{2, 3, 4, 5, \ldots, n-2\}\cup \{n\} \text{ if $n$ is even.}
  \end{align*}
  If $|R| = 4$ then
  \begin{align*}
    L(x^n) &= \{3, 4, 5, \ldots, n-4\}\cup \{n-2, n\} \text{ if $n$ is odd, and}\\
    L(x^n) &= \{2, 3, 4, 5, \ldots, n-4\}\cup \{n-2, n\} \text{ if $n$ is even.}
  \end{align*}
\end{theorem}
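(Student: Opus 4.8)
The plan is to prove the asserted equalities by establishing matching upper and lower bounds for $L(x^n)$, and to observe that essentially all of the substantive work has already been carried out in the preceding lemmas; the theorem is the assembly of those pieces into the four parity/cardinality cases. The only genuinely case-dependent ingredient is the membership of $n-3$: by Proposition~\ref{prop:n-3}, $n-3 \in L(x^n)$ precisely when $|R| > 4$ (for $n \geq 6$), and this single fact accounts for the sole difference between the $|R| > 4$ and $|R| = 4$ answers. Everything else is uniform in the cardinality.

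For the upper bound, I would start from Lemma~\ref{lemma:5}, which gives $L(x^n) \subseteq \{1, 2, \ldots, n\}$ since $x^n$ is regular of degree $n$. I then delete the elements that cannot occur. First, $1 \notin L(x^n)$ because $x^n$ is reducible for $n \geq 2$. Next, $n-1 \notin L(x^n)$ by Lemma~\ref{lemma:neg}, and when $n$ is odd the same lemma gives $2 \notin L(x^n)$. Finally, when $|R| = 4$, the contrapositive of Proposition~\ref{prop:n-3} gives $n-3 \notin L(x^n)$. A short check in each of the four cases confirms that removing exactly these elements from $\{1, \ldots, n\}$ leaves precisely the set named in the theorem; for instance, when $n$ is even and $|R| = 4$ one removes $\{1, n-3, n-1\}$, leaving $\{2, 3, \ldots, n-4\} \cup \{n-2, n\}$.

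For the lower bound, Lemma~\ref{lemma:new} already supplies the bulk of the containment: it gives $\{3, 4, \ldots, n-4\} \cup \{n-2, n\} \subseteq L(x^n)$ when $n$ is odd, and $\{2, 3, \ldots, n-4\} \cup \{n-2, n\} \subseteq L(x^n)$ when $n$ is even. When $|R| = 4$ this is already the full claimed set, so nothing further is needed. When $|R| > 4$, I would invoke Proposition~\ref{prop:n-3} to adjoin the single missing value $n-3$; since $n-3$ sits exactly in the gap between $n-4$ and $n-2$, the union $\{3, \ldots, n-4\} \cup \{n-3\} \cup \{n-2, n\}$ collapses to the contiguous range $\{3, \ldots, n-2\} \cup \{n\}$ (and similarly with the low-end structure preserved when $n$ is even). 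Matching this against the upper bound yields equality.

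The proof has no deep obstacle, since the constructive factorizations and the degree/order arguments are already isolated in Lemmas~\ref{lemma:6}, \ref{lemma:7}, and~\ref{lemma:new} together with Proposition~\ref{prop:n-3}. The one point requiring care is tightness: I must verify in each of the four cases that the elements excluded by the upper bound are exactly the complement of those produced by the lower bound, so that the two inclusions meet with no residual gap. Tracking the boundary values $n-4$, $n-3$, $n-2$, and $n-1$ correctly across both parities and both cardinalities is the only place where an off-by-one slip could intrude, and the base case $n = 7$ serves as a useful sanity check.
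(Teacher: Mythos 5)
Your proposal is correct and follows essentially the same route as the paper's own proof: both sandwich $L(x^n)$ between the lower bound of Lemma~\ref{lemma:new} and the upper bound $\{1,2,\ldots,n\}$ of Lemma~\ref{lemma:5}, exclude $1$ (reducibility), $n-1$, and, for odd $n$, $2$ via Lemma~\ref{lemma:neg}, and let Proposition~\ref{prop:n-3} decide the membership of $n-3$ according to whether $|R|>4$. The bookkeeping of the boundary values $n-4$, $n-3$, $n-2$, $n-1$ that you flag is exactly the residual check the paper performs implicitly, so nothing is missing.
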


\begin{proof}
  Let $n\ge 7$.  Regardless of the cardinality of $R$, by Lemmas~\ref{lemma:5}
  and \ref{lemma:new}, if $n$ is odd,
  \[
    \{3, 4, \ldots, n-4\}\cup \{n-2,n\} \subseteq L(x^n) \subseteq\{1, 2,
    \ldots,n\},
  \]
  and if $n$ is even, 
  \[
    \{2, 3, 4, \ldots, n-4\}\cup \{n-2,n\} \subseteq L(x^n) \subseteq\{1, 2,
    \ldots,n\}.
  \]
  Since $x^n$ is reducible, $1\not\in L(x^n)$.  By Lemma~\ref{lemma:neg},
  $n-1\not\in L(x^n)$, and $2\not\in L(x^n)$ if $n$ is odd.  Finally, by
  Proposition~\ref{prop:n-3}, $n-3\in L(x^n)$ if and only if $|R|>4$.
\end{proof}





\begin{example}
In $\Z_{p^2}[x]$ where $p$ is prime, we have
\begin{align*}
L(x^{10}) &= \{2, 3, 4, 5, 6, 7, 8,10 \} \quad\mbox{ if $p>2$,} \\
L(x^{10}) &= \{2, 3, 4, 5, 6, 8,10 \} \quad\quad\mbox{ if $p=2$.}
\end{align*}
\end{example}

\bigskip

\noindent {\bf Acknowledgment:} This article is a generalization of part of
Daniel Kline's Master's Thesis \cite{K} under the direction of Mark Rogers.  The
thesis work was inspired by the paper \cite{FF} of Sophie Frisch and Christopher
Frei, and a private email exchange with Sophie Frisch.  The authors wish to
thank the referee for several helpful suggestions and corrections.


\begin{thebibliography}{Mat}

\bibitem[A1]{A1} Anderson,~D.~D.~and Valdes-Leon, Silvia, {\em Factorization in
    commutative rings with zero divisors}, Rocky Mountain J.~Math 26 (1996),
  439--480.

\bibitem[A2]{A2} Anderson,~D.~D.~and Valdes-Leon, Silvia, {\em Factorization in
    commutative rings with zero divisors II}, Lecture Notes in Pure and
  Appl.~Math, vol.~189, 197--219.  Dekker, New York, 1997.

\bibitem[A3]{A3} A{\=g}arg{\"u}n, Ahmet~G.~and Anderson,~D.~D.~and Valdes-Leon,
  Silvia, {\em Factorization in commutative rings with zero divisors III}, Rocky
  Mountain J.~Math 31 (2001), 1--21.

\bibitem[FF]{FF} Frei,~C.~and Frisch,~S., {\em Non-unique factorization of
    polynomials over residue class rings of the integers}, Comm.~Algebra 39
  (2011), no. 4, 1482-1490.

\bibitem[G]{G} Geroldinger, A., \emph{Sets of Lengths}, arXiv:1509.07462 [math.GR]
  
\bibitem[K]{K} Kline, D., \emph{Sets of Lengths over Residue Class Rings of the
    Integers}, Master's thesis, Missouri State University, 2011.
  
\bibitem[M]{M} McDonald, Bernard, {\em Finite Rings with Identity}, Pure and
  Applied Mathematics, Vol. {\bf 28}, Marcel Dekker, New York, 1974

 \end{thebibliography}
\end{document}